\def\N{\mathbb N}
\def\del{\Delta}
\def\gam{\gamma}
\def\eps{\varepsilon}
\def\ba{\begin{align}}
\begin{document}

\title{Delay Compensated Control of the Stefan Problem and Robustness to Delay Mismatch\protect}

\author[1]{Shumon Koga*}

\author[2]{Delphine Bresch-Pietri}

\author[1]{Miroslav Krstic}

\authormark{SHUMON KOGA \textsc{et al}}

\address[1]{\orgdiv{Department of Mechanical and Aerospace Engineering}, \orgname{University of California at San Diego}, \orgaddress{\state{La Jolla, CA 92093-0411}, \country{USA}}}

\address[2]{\orgdiv{CAS - Centre Automatique et Syst{\`e}mes}, \orgname{MINES ParisTech, PSL Research University}, \orgaddress{\state{Paris}, \country{France}}}

\corres{*Correspondence to: Shumon Koga, Department of Mechanical and Aerospace Engineering, University of California at San Diego, La Jolla, CA 92093-0411, USA. \email{skoga@eng.ucsd.edu}}


\abstract[Summary]{This paper presents a control design for the one-phase Stefan problem under actuator delay via a backstepping method. The Stefan problem represents a liquid-solid phase change phenomenon which describes the time evolution of a material's temperature profile and the interface position. The actuator delay is modeled by a first-order hyperbolic partial differential equation (PDE), resulting in a cascaded transport-diffusion PDE system defined on a time-varying spatial domain described by an ordinary differential equation (ODE). Two nonlinear backstepping transformations are utilized for the control design. The setpoint restriction is given to guarantee a physical constraint on the proposed controller for the melting process. This constraint ensures the exponential convergence of the moving interface to a setpoint and the exponential stability of the temperature equilibrium profile and the delayed controller in the ${\cal H}_1$ norm. Furthermore, robustness analysis with respect to the delay mismatch between the plant and the controller is studied, which provides analogous results to the exact compensation by restricting the control gain.}

\keywords{Stefan problem, distributed parameter systems, time delay systems, backstepping, nonlinear stabilization}

\jnlcitation{\cname{%
\author{Williams K.}, 
\author{B. Hoskins}, 
\author{R. Lee}, 
\author{G. Masato}, and 
\author{T. Woollings}} (\cyear{2016}), 
\ctitle{A regime analysis of Atlantic winter jet variability applied to evaluate HadGEM3-GC2}, \cjournal{Q.J.R. Meteorol. Soc.}, \cvol{2017;00:1--6}.}

\maketitle


\section{INTRODUCTION}
\subsection{Background} 

Liquid-solid phase transitions are physical phenomena which appear in various kinds of science and engineering processes. Representative applications include sea-ice melting and freezing~\cite{Shumon17seaice}, continuous casting of steel \cite{petrus2012}, cancer treatment by cryosurgeries \cite{Rabin1998}, additive manufacturing for materials of both polymer \cite{koga2018polymer} and metal \cite{chung2004}, crystal growth~\cite{conrad_90}, lithium-ion batteries \cite{koga2017battery}, and thermal energy storage systems \cite{zalba03}. Physically, these processes are described by a temperature profile along a liquid-solid material, where the dynamics of the liquid-solid interface is influenced by the heat flux induced by melting or solidification. A mathematical model of such a physical process is called the Stefan problem\cite{Gupta03}, which is formulated by a diffusion PDE defined on a time-varying spatial domain. The domain's length dynamics is described by an ODE dependent on the Neumann boundary value of the PDE state. Apart from the thermodynamical model, the Stefan problem has been employed to model several chemical, electrical, and social dynamics such as tumor growth process \cite{Friedman1999}, domain walls in ferroelectric thin films \cite{mcgilly2015}, spreading of invasive species in ecology \cite{Du2010speading}, and information diffusion on social networks \cite{Lei2013}.

While the numerical analysis of  the one-phase Stefan problem is broadly covered in the literature, their control related problems have been addressed relatively fewer. In addition to it, most of the proposed control approaches are based on finite-dimensional approximations with the assumption of  an explicitly given moving boundary dynamics \cite{Daraoui2010},\cite{Armaou01},\cite{Petit10}. Diffusion-reaction processes with an explicitly known moving boundary dynamics are investigated in  \cite{Armaou01} based on the concept of inertial manifold \cite{Christofides98_Parabolic} and the  partitioning of the infinite dimensional dynamics into slow and fast  finite dimensional modes. Motion planning boundary control has been adopted in   \cite{Petit10} to ensure asymptotic stability of a one-dimensional one-phase nonlinear Stefan problem   assuming  a prior known  moving boundary  and deriving the manipulated input  from the solutions of   the inverse problem. However, the series representation introduced in \cite{Petit10}  leads to highly complex solutions that reduce controller design possibilities. 
 
For control objectives, infinite-dimensional approaches have been used for stabilization of  the temperature profile and the moving interface of a 1D Stefan problem, such as enthalpy-based feedback~\cite{petrus2012} and geometric control~\cite{maidi2014}. These works designed control laws ensuring the asymptotical stability of the closed-loop system in the ${ L}_2$ norm. However, the  result in \cite{maidi2014}  is stated based on physical assumptions on the liquid temperature being greater than the melting point, which needs to be guaranteed by showing strictly positive boundary input. 

Recently, boundary feedback controllers for the Stefan problem have been designed via a ``backstepping transformation" \cite{krstic2008boundary,andrew2004} which has been used for many other classes of infinite-dimensional systems. For instance, \cite{Shumon16} designed a state feedback control law by introducing a nonlinear backstepping transformation for moving boundary PDE, which achieved the exponentially stabilization of the closed-loop system in the ${\cal H}_1$ norm without imposing any {\em a priori} assumption. Based on the technique, \cite{Shumon16CDC} designed an observer-based output feedback control law for the Stefan problem, \cite{Shumon18journal} extended the results in \cite{Shumon16, Shumon16CDC} by studying the robustness with respect to the physical parameters and developed an analogous design with Dirichlet boundary actuation, \cite{Shumon17ACC} designed a state feedback control for the Stefan problem under the material's convection, and \cite{koga2018ISS} investigated an input-to-state stability of the control of Stefan problem in \cite{Shumon16} with respect to an unknown heat loss at the interface. 

In the presence of actuator delay, a delay compensation technique has been developed intensively for many classes of systems using a backstepping transformation \cite{krstic2009delay}: see \cite{krstic2008backstepping} for linear ODE systems and \cite{krstic2010} for nonlinear ODE systems. Using the Lyapunov method, \cite{krstic2008lyapunov} presented the several analysis of the predictor-based feedback control for ODEs such as robustness with respect to the delay mismatch and disturbance attenuation. To deal with systems under unknown and arbitrary large actuator delay, a Lyapunov-based delay-adaptive control design was developed in \cite{bresch2010, bresch2014} for both linear and nonlinear ODEs with certain systems, and \cite{bresch2009} extended the design for trajectory tracking of uncertain linear ODEs. For control of unstable parabolic PDE under a long input delay, \cite{krstic2009} designed the stabilizing controller by introducing two backstepping transformations for the stabilization of the unstable PDE and the compensation of the delay. By the similar technique, in \cite{Tang2011delay} the coupled diffusion PDE-ODE system in the presence of the actuator delay is stabilized. Implementation issues on the predictor-bsaed feedback are covered in \cite{karafyllis2017book} by studying the closed-loop analysis under the sampled-data control. 

\subsection{Results and contributions} 

Our conference paper \cite{koga2017CDC} presented the delay compensated control for the one-phase Stefan problem under actuator delay for the stabilization of the interface position and the temperature profile at a desired setpoint and the equillibrium temperature. This paper extends the results in \cite{koga2017CDC} by: 

\begin{itemize}
\item proving that the designed controller is equivalent to the prediction of the nominal control law for delay-free Stefan problem over a time interval corresponding to the input delay, 
\item and addressing the robustness analysis of the closed-loop system with respect to the mismatch between the delay in the plant and the one compensated by the designed control. 
\end{itemize}

First, combining our previous result in \cite{Shumon16} with the result in \cite{krstic2009}, two nonlinear backstepping transformations for moving boundary PDE are employed. One is for the delay-free control design of Stefan problem based on \cite{Shumon16}, and the other is for the compensation of actuator delay formulated with Volterra and Fredholm type transformations based on \cite{krstic2009}. The associated boundary feedback controller remains positive under a setpoint restriction due to the energy conservation, which guarantees a condition of the model to be valid. The closed-loop system with the proposed delay compensated controller achieves the exponential stabilization of the moving interface to the desired setpoint while ensuring the  exponential stability of the temperature profile and the controller to the equillibrium set in the ${\cal H}_{1}$-norm sense. Furthermore, the robustness analysis is investigated by proving that the positivity of the controller and the exponential stability of the closed-loop systems hold for a given delay mismatch under sufficiently small control gain. 


\subsection{Organizations} 

This paper is organized as follows. The Stefan problem with actuator delay is presented  in Section \ref{model}, and the control objective and our main result are stated in Section \ref{statement}. Section \ref{nonlineartarget} introduces a backstepping transformation for moving boundary problems which enables us to design the state feedback control law. The physical constraints of this problem are stated  in Section \ref{sec:constraints}. The stability analysis of the closed-loop system is established in Section \ref{stability}. The equivalence of the designed control with a prediction of the nominal control law is shown in Section \ref{sec:prediction}. Robustness analysis with respect to the delay mismatch is studied in Section \ref{sec:robust}. Supportive numerical simulations are provided in Section \ref{simulation}. The paper ends with the conclusion in  Section \ref{conclusion}. 

\section{Description of the Physical Process}\label{model}
Consider a physical model which describes the melting or solidification mechanism in a pure one-component material of length $L$ in one dimension. In order to mathematically describe the position at which phase transition from liquid to solid occurs, we divide the domain $[0, L]$ into two time-varying sub-domains, namely,  the interval $[0,s(t)]$ which contains the liquid phase, and the interval $[s(t),L]$ that contains the solid  phase. A heat flux enters the material through the boundary at $x=0$ (the external boundary of the liquid phase) which affects the dynamics of the liquid-solid interface. The boundary heat flux is manipulated as a controller, and here we impose an actuator delay which is caused by several reasons such as computational time or communication delay. As a consequence, the heat equation alone does not provide a complete description of the phase transition and must be coupled with the dynamics that describes the moving boundary. This configuration is shown in Fig.~\ref{fig:stefan}.

\begin{figure}[t]
\centering
\includegraphics[width=3.2in]{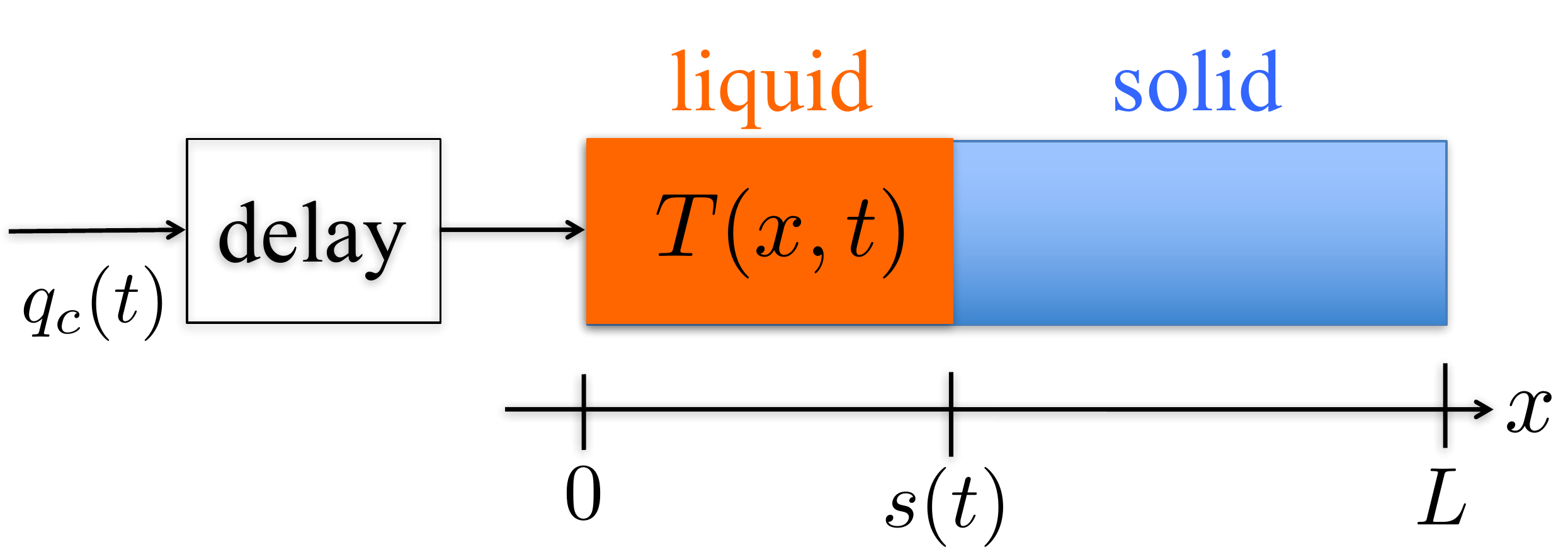}
\caption{Schematic of 1D Stefan problem with actuator delay.}
\label{fig:stefan}
\end{figure}

Assuming that the temperature in the liquid phase is not lower than the melting temperature $T_{{\rm m}}$ of the material, the following coupled system can be derived.
\begin{itemize} 
\item The diffusion equation of the temperature in the liquid-phase is described by
\begin{align}\label{eq:stefanPDE}
T_t(x,t)&=\alpha T_{xx}(x,t), \hspace{1mm} 0\leq x\leq s(t),\quad  \alpha :=\frac{k}{\rho C_p}, 
\end{align}
with the boundary conditions
\begin{align}\label{eq:stefancontrol}
-k T_x(0,t)&=q_{{\rm c}}(t-D), \\ \label{eq:stefanBC}
T(s(t),t)&=T_{{\rm m}},
\end{align}
and the initial values
\begin{align}\label{eq:stefanIC}
T(x,0)=T_0(x), \quad s(0) = s_0, 
\end{align}
where $T(x,t)$,  $q_{{\rm c}}(t)$,  $\rho$, $C_p$, $k$, and $D$ are the distributed temperature of the liquid phase, manipulated heat flux, liquid density, the liquid heat capacity, the liquid heat conductivity, and the input time delay respectively.
\item The local  energy balance at the position of the liquid-solid interface $x=s(t)$ leads to the Stefan condition defined as  the following ODE
\begin{align}\label{eq:stefanODE}
 \dot{s}(t)=-\beta T_x(s(t),t), \quad \beta :=\frac{k}{\rho \Delta H^*}, 
\end{align}
where $\Delta H^*$ denotes the latent heat of fusion. Equation \eqref{eq:stefanODE}   expresses the velocity of the liquid-solid moving interface. 
\end{itemize}
For the sake of brevity, we refer the readers to  \cite{Gupta03}, where the Stefan condition of a solidification process is derived. 
\begin{remark}As the moving interface  $s(t)$ depends on the temperature, the problem defined in  \eqref{eq:stefanPDE}--\eqref{eq:stefanODE}  is nonlinear.\end{remark}
\begin{remark}\label{assumption}
Due to the so-called isothermal interface condition
 that prescribes the melting temperature $T_{{\mathrm m}}$ at the interface through  \eqref{eq:stefanBC},
this form of the Stefan problem is a reasonable model only if the following condition holds:
\begin{align}
\label{valid1}T(x,t) \geq& T_{{\mathrm m}} \quad \textrm{ for }\quad \forall x \in [0,s(t)], \quad \forall t>0. 
\end{align}
\end{remark}

The model validity requires the liquid temperature to be greater than the melting temperature and such a condition  yields the following property on moving interface. 
\begin{lemma}\label{monoinc}
If the model validity condition \eqref{valid1} holds, then the moving interface is always increasing, i.e. 
\begin{align}
\label{valid2}\dot{s}(t)\geq&0, \quad \textrm{ for }\quad \forall t\geq0. 
\end{align}
\end{lemma}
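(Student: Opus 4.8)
The plan is to exploit the isothermal interface condition \eqref{eq:stefanBC} together with the model validity \eqref{valid1} to pin down the sign of the Neumann trace $T_x(s(t),t)$, and then to read off the sign of $\dot s(t)$ directly from the Stefan condition \eqref{eq:stefanODE}. Fixing an arbitrary time $t$ at which \eqref{valid1} holds, I would introduce the shifted temperature $\theta(x) := T(x,t) - T_{{\mathrm m}}$ on the liquid domain $x \in [0, s(t)]$. The model validity \eqref{valid1} gives $\theta(x) \geq 0$ throughout, while \eqref{eq:stefanBC} gives $\theta(s(t)) = 0$; hence $\theta$ attains its minimum over $[0,s(t)]$ precisely at the right endpoint $x = s(t)$.

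The next step is to convert this minimality into a sign condition on the one-sided spatial derivative at the interface. For any $x < s(t)$ near $s(t)$, the difference quotient satisfies
\begin{align}
\frac{\theta(s(t)) - \theta(x)}{s(t) - x} = \frac{-\theta(x)}{s(t) - x} \leq 0,
\end{align}
since $\theta(x) \geq 0$ and $s(t) - x > 0$. Passing to the limit $x \to s(t)^-$ yields $T_x(s(t),t) = \theta'(s(t)) \leq 0$. Substituting this into \eqref{eq:stefanODE} and using that $\beta = k/(\rho \Delta H^*) > 0$ is a ratio of positive physical constants gives $\dot s(t) = -\beta T_x(s(t),t) \geq 0$. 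As $t$ was an arbitrary time at which \eqref{valid1} holds, this establishes \eqref{valid2}; the endpoint $t=0$ is covered provided the initial data satisfies the compatible inequality $T_0(x) \geq T_{{\mathrm m}}$.

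The argument is in essence a first-order optimality condition at a boundary minimum, so I do not anticipate a genuine obstacle. The only point meriting care is the existence of the one-sided derivative $\theta'(s(t))$ and its identification with the Neumann trace $T_x(s(t),t)$ appearing in \eqref{eq:stefanODE}, which is ensured by the regularity implicit in the PDE formulation \eqref{eq:stefanPDE}. I note that the conclusion obtained here is weaker than what a Hopf-type boundary-point lemma would give: under strict positivity of $\theta$ in the interior and the parabolic structure of \eqref{eq:stefanPDE}, one could even upgrade the inequality to the strict statement $\dot s(t) > 0$, but the non-strict version \eqref{valid2} is all that is needed and it follows from the elementary estimate above.
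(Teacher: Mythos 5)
Your argument is correct, but it is genuinely more elementary than the route the paper takes. The paper disposes of this lemma in one line by invoking Hopf's Lemma (the parabolic boundary-point lemma) applied to the boundary condition \eqref{eq:stefanBC} and the validity condition \eqref{valid1}, citing the treatment in the Gupta monograph; that machinery uses the parabolic structure of \eqref{eq:stefanPDE} and, under strict positivity of $T-T_{{\mathrm m}}$ in the interior, delivers the \emph{strict} inequality $T_x(s(t),t)<0$ and hence $\dot s(t)>0$. Your proof instead uses nothing but the first-order optimality condition at the boundary minimum $x=s(t)$: since $\theta(x)=T(x,t)-T_{{\mathrm m}}\ge 0$ with $\theta(s(t))=0$, the one-sided difference quotient is nonpositive and so $T_x(s(t),t)\le 0$, giving $\dot s(t)\ge 0$ from \eqref{eq:stefanODE}. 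This trades the strict conclusion for complete self-containedness — you never need the PDE beyond the regularity that makes the Neumann trace in \eqref{eq:stefanODE} well defined — and since the lemma only asserts the non-strict inequality \eqref{valid2}, your weaker conclusion is exactly what is required. You correctly flag both the regularity caveat and the fact that Hopf's Lemma would upgrade the statement, so the comparison is already implicit in your own write-up; there is no gap.
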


Applying Hopf's Lemma to the boundary condition \eqref{eq:stefanBC} and the condition \eqref{valid1}, the dynamics \eqref{eq:stefanODE} yields Lemma \ref{monoinc} as shown in \cite{Gupta03}. By Remark \ref{assumption}, it is justified to impose the following assumption on the initial values. 
\begin{assumption}\label{initial} 
$s_0>0$ and there exists Lipschitz constant $ H>0$ such that 
\begin{align}
T_{{\rm m}}\leq T_0(x) \leq T_{{\rm m}} + H(s_0 - x), \quad \forall x\in(0,s_0).
\end{align}
 \end{assumption}
 
 Then, the condition \eqref{valid1} is guaranteed by the following lemma. 
\begin{lemma}\label{lem1}
With Assumption \ref{initial}, the model validity condition \eqref{valid1} remains if
\begin{align}
q_{{\rm c}}(t-D) \geq 0,  \quad \forall t>0. 
\end{align}
\end{lemma}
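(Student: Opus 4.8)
The plan is to reduce \eqref{valid1} to a minimum principle for the heat equation applied to the shifted temperature. First I would set $v(x,t) := T(x,t) - T_{{\rm m}}$ on the moving-boundary parabolic domain $\{(x,t) : 0 \le x \le s(t),\ t > 0\}$. By \eqref{eq:stefanPDE} this $v$ solves $v_t = \alpha v_{xx}$, and translating the data \eqref{eq:stefancontrol}--\eqref{eq:stefanIC} gives: the initial condition $v(x,0) = T_0(x) - T_{{\rm m}} \ge 0$ on $(0,s_0)$ by Assumption \ref{initial}; the Dirichlet condition $v(s(t),t) = 0$ on the interface from \eqref{eq:stefanBC}; and, from \eqref{eq:stefancontrol}, the Neumann condition $v_x(0,t) = -q_{{\rm c}}(t-D)/k \le 0$ on the fixed face $x=0$ precisely because $q_{{\rm c}}(t-D) \ge 0$. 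The target \eqref{valid1} is then exactly the statement $v \ge 0$ throughout the domain.

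Next I would argue by contradiction using the weak minimum principle for parabolic equations on the curved cylinder (working on an arbitrary finite horizon $t\in[0,T_f]$ and then letting $T_f$ grow, so that the minimum over the closure is genuinely attained). Suppose $v$ attains a negative minimum value $m < 0$. The minimum principle forces this value to be achieved on the parabolic boundary, i.e. on the bottom face $t=0$, on the interface $x=s(t)$, or on the fixed face $x=0$. The first two faces are excluded at once since $v \ge 0$ there, so the minimum must be located at some point $(0,t^\ast)$ on the face $x=0$ with $v(0,t^\ast) = m < 0$.

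The decisive step is to exclude this last possibility with Hopf's lemma, in the same spirit in which it is invoked for Lemma \ref{monoinc}. At a boundary minimum sitting on the face $x=0$, Hopf's lemma yields a strict sign on the inward normal derivative, namely $v_x(0,t^\ast) > 0$ (the solution being nonconstant, since $v=0$ on the interface while $m<0$). This directly contradicts the Neumann sign condition $v_x(0,t) \le 0$ established above from $q_{{\rm c}}(t-D) \ge 0$. Hence no negative minimum can occur, so $v \ge 0$, which is precisely \eqref{valid1}.

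I expect the only delicate points to be technical rather than conceptual: ensuring enough regularity of $T$ up to the moving boundary and near the corner $(0,0)$ for the minimum principle and Hopf's lemma to apply, and confirming the interior-ball condition along the face $x=0$ (which holds, as that face is a vertical line in the $(x,t)$-plane). The genuinely essential ingredient is the treatment of the Neumann face $x=0$: unlike the bottom and the interface, nonnegativity of $v$ there is \emph{not} available a priori and must be extracted from the sign of the prescribed flux through Hopf's lemma.
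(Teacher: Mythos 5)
Your proposal is correct and follows essentially the same route as the paper, which simply invokes the maximum principle argument from the cited reference (Gupta) without spelling out the details. Your write-up supplies exactly the standard details behind that citation: reduction to $v=T-T_{\rm m}\ge 0$, exclusion of a negative minimum on the bottom face and the interface, and the Hopf-lemma contradiction with the sign of the prescribed flux on the Neumann face $x=0$.
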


The proof of Lemma \ref{lem1} is provided by maximum principle as shown in \cite{Gupta03}. Hence, we impose the following assumption. 
\begin{assumption}\label{assum1}
The past input maintains positive, i.e. 
\begin{align}
q_{{\rm c}}(t) \geq 0, \quad -D<\forall t<0. 
\end{align}
\end{assumption}

With Assumption \ref{initial} and \ref{assum1}, the model validity condition \eqref{valid1} remains if $q_{{\rm c}}(t) \geq 0$ for $\forall t>0$ by Lemma \ref{lem1}.  

\section{Control Problem statement}\label{statement}
The control objective is to drive the moving interface $s(t)$ to a desired setpoint $s_{{\rm r}}$ while ensuring the convergence of the ${\cal H}_1$-norm of the temperature $T(x,t)$ in the liquid phase by manipulating the heat flux $q_{{\rm c}}(t)$. Hence, we aim to achieve
\begin{align} \label{objective} 
s(t) \to s_{{\rm r}}, \quad T(x,t) \to T_{{\rm m}}, \quad \textrm{as} \quad  t\to \infty
\end{align} 
for given $(T_{0}(x), s_0)$ which satisfies Assumption \ref{initial}. The condition $q_{{\rm c}}(t)>0$ for $t>0$ imposes the choice of the setpoint as described below. The plant \eqref{eq:stefanPDE}--\eqref{eq:stefanODE} obeys the following energy conservation law:
\begin{align}\label{energy}
\frac{d}{dt} \left(\frac{k}{\alpha} \int_{0}^{s(t)} (T(x,t) - T_{{\rm m}}) dx + \frac{k}{\beta} s(t) + \int_{t-D}^{t} q_{{\rm c}}(\theta) d\theta \right) = q_{{\rm c}}(t).
\end{align}
The left hand side of \eqref{energy} denotes the growth of internal energy of the plant and the stored energy by the delayed heat controller, and its right hand side denotes the external work provided by the injected heat flux. The control objective is achieved if and only if the following limit on the total energy is satisfied:
\begin{align}\label{internal}
\lim_{t\to \infty} \left (\frac{k}{\alpha} \int_{0}^{s(t)} (T(x,t) - T_{{\rm m}}) dx + \frac{k}{\beta} s(t) + \int_{t-D}^{t} q_{{\rm c}}(\theta) d\theta \right) = \frac{k}{\beta} s_{{\rm r}},
\end{align}
which can be derived by substituting \eqref{objective} and $q_{c}(t) \to 0$ into the left hand side of \eqref{internal}. 
Taking integration of \eqref{energy} from $t=0$ to $t=\infty$ with the help of $q_{{\rm c}}(t)>0$ for $t>0$ and \eqref{internal}, the following assumption on the setpoint is provided. 
\begin{assumption}\label{assum2}
The setpoint is chosen to satisfy
\begin{align}\label{setpoint}
s_{{\rm r}} > s_0 + \beta \left(\int_{-D}^{0} \frac{q_{{\rm c}}(t)}{k} dt + \frac{1}{\alpha} \int_{0}^{s_0}  (T_0(x)-T_{{\rm m}}) dx \right). 
\end{align} 
\end{assumption}

Next, we state our main result.
\begin{theorem}\label{thm1}
Under Assumptions \ref{initial}-\ref{assum2}, the closed-loop system consisting of the plant \eqref{eq:stefanPDE}--\eqref{eq:stefanODE} and the control law
\begin{align}\label{delaycomp}
q_{{\rm c}}(t) =& - c  \left(\int_{t-D}^{t} q_{{\rm c}}(\theta) d\theta + \frac{k}{\alpha} \int_{0}^{s(t)}  (T(x,t) - T_{{\rm m}}) dx + \frac{k}{\beta} (s(t) -s_{{\rm r}}) \right), 
\end{align}
where $c>0$ is an arbitral control gain, maintains the model validity \eqref{valid1} and is exponentially stable in the sense of the norm
\begin{align}\label{h1}
 ||T(x,t)-T_{{\rm m}}||^{2}_{{\cal H}_1(0,s(t))}+(s(t)-s_{{\rm r}})^2+ ||q_{{\rm c}}(t-x)||^2_{{\cal H}_1(0,D)}. 
\end{align}
\end{theorem}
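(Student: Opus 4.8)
The plan is to separate the argument into two essentially independent pieces: an energy computation that delivers the positivity and model-validity claim (and the a priori bounds I will need), and a two-stage backstepping construction with a Lyapunov functional that delivers the ${\cal H}_1$ exponential stability. As a preliminary I would recast the input delay as a transport PDE: setting $u(x,t):=q_{\rm c}(t-x)$ on $x\in[0,D]$ gives the cascade $u_t=-u_x$ with $u(0,t)=q_{\rm c}(t)$, feeding the heat equation through $-kT_x(0,t)=u(D,t)$, and I would work throughout in the error variables $T-T_{\rm m}$ and $X(t):=s(t)-s_{\rm r}$.

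For positivity and model validity I would use the energy identity \eqref{energy} directly. Writing $E(t)$ for the total energy deviation appearing inside the controller \eqref{delaycomp}, so that $q_{\rm c}(t)=-cE(t)$, identity \eqref{energy} collapses to the scalar ODE $\dot E(t)=q_{\rm c}(t)=-cE(t)$, whence $E(t)=E(0)e^{-ct}$. Assumption \ref{assum2} is precisely the statement $E(0)<0$, so $E(t)<0$ and therefore $q_{\rm c}(t)=-cE(t)>0$ for all $t\ge 0$; together with Assumption \ref{assum1} and Lemma \ref{lem1} this yields the model validity \eqref{valid1}, and Lemma \ref{monoinc} then gives $\dot s(t)\ge 0$. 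Feeding the sign information $\int_0^{s(t)}(T-T_{\rm m})\,dx\ge 0$ and $\int_{t-D}^{t}q_{\rm c}\ge 0$ back into $E(t)<0$ also forces $s_0\le s(t)<s_{\rm r}$, the a priori bound I rely on below.

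For the ${\cal H}_1$ stability I would compose the nonlinear moving-boundary backstepping transformation of \cite{Shumon16} (which stabilizes the diffusion--ODE subsystem of the delay-free Stefan problem), mapping $(T-T_{\rm m},X)$ to a target variable $w$, with the Volterra/Fredholm predictor transformation of \cite{krstic2009} applied to the transport state $u$, mapping it to $v$. The composite target system compensates the delay and decouples into an exponentially stable $(w,X)$ subsystem weakly coupled to a homogeneous transport equation for $v$. On this system I would take a Lyapunov functional $V=X^2+a\!\int_0^{s(t)}(w^2+w_x^2)\,dx+b\!\int_0^{D}(1+x)(v^2+v_x^2)\,dx$, differentiate it, and use the bounds $0\le\dot s(t)$ and $s(t)<s_{\rm r}$ to dominate the indefinite $\dot s$-terms, obtaining $\dot V\le-\kappa V$ for suitable $a,b,\kappa>0$. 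Invertibility and boundedness of the two transformations — whose kernels depend continuously on $s(t)$ and are uniformly bounded thanks to $s_0\le s(t)<s_{\rm r}$ — then transfer exponential decay of $V$ back to the norm \eqref{h1}.

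The hardest part will be the Lyapunov step on the time-varying domain: differentiating the spatial integrals produces boundary contributions at $x=s(t)$ carrying a factor $\dot s(t)$, and the two transformations inject further $\dot s$-dependent cross terms, so the estimate hinges on sharp a priori control of $\dot s(t)$ — nonnegativity from Lemma \ref{monoinc} together with a uniform upper bound obtained from the Lipschitz structure of Assumption \ref{initial} via the maximum principle. The second delicate point is establishing uniform bounds on the transformation kernels and their inverses over the moving domain, so that the target-system energy and the physical norm \eqref{h1} are genuinely equivalent and the decay of $V$ really is the asserted exponential stability.
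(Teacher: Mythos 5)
Your first half --- the positivity and physical-constraints argument --- is exactly the paper's: writing $q_{\rm c}(t)=-cE(t)$ and using the energy identity \eqref{energy} to get $\dot E=-cE$ is the same computation as the paper's $\dot q_{\rm c}=-cq_{\rm c}$ in Lemma \ref{Prop1}, Assumption \ref{assum2} is indeed $E(0)<0$, and the deduction of $s_0<s(t)<s_{\rm r}$ from the sign information matches Lemma \ref{lem:interface}. The overall architecture of the second half (composition of the moving-boundary transformation of \cite{Shumon16} with the Volterra/Fredholm predictor transformation of \cite{krstic2009}, a weighted Lyapunov functional on the target system, and norm equivalence via uniform kernel bounds on $s\in[s_0,s_{\rm r}]$) also matches the paper, up to cosmetic choices of weights.

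There is, however, a genuine gap in your Lyapunov step. The $\dot s$-dependent terms (the boundary contributions at $x=s(t)$, the source $\frac{c}{\beta}\dot s(t)X(t)$ in the $w$-equation \eqref{tar4}, and the cross terms from the change of variable \eqref{omedef}) all enter with a \emph{positive} sign and are comparable to $V$ itself, so the differential inequality one actually obtains is $\dot V\le -bV+a\dot s(t)V$, as in \eqref{Vder}. Your plan to absorb these terms using a uniform upper bound $\dot s(t)\le M$ (from the Lipschitz structure of Assumption \ref{initial} and the maximum principle) would require $aM<b$, i.e.\ a smallness condition on the initial data relative to the constants $a,b$ --- but Theorem \ref{thm1} is asserted for arbitrary gain $c>0$ and arbitrary admissible initial data, and no such smallness is available. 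The missing idea is that the $\dot s$ term is handled not through smallness of $\dot s$ but through boundedness of $s$ itself: the paper sets $W=Ve^{-as(t)}$ as in \eqref{Wdef}, so that $\dot W=(\dot V-a\dot s V)e^{-as(t)}\le -bW$, and then uses $s_0<s(t)<s_{\rm r}$ from \eqref{constraint4} to convert the decay of $W$ back into $V(t)\le V(0)e^{as_{\rm r}}e^{-bt}$. Without this rescaling (or an equivalent use of $\int_0^\infty\dot s\,dt\le s_{\rm r}-s_0$ in a comparison argument), your estimate does not close.
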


The proof of Theorem \ref{thm1} is established through Sections \ref{nonlineartarget}--\ref{stability}.

\section{Backstepping Transformation}\label{nonlineartarget}
\subsection{Change of variables}
Introduce reference error variables defined by
\begin{align}\label{state}
u(x,t) :=& T(x,t) - T_{{\rm m}}, \quad X(t) := s(t) - s_{{\rm r}}. 
\end{align}
Next, we introduce a variable 
\begin{align}\label{vdef}
v(x,t) = \frac{q_{{\rm c}}(t-x-D)}{k}.
\end{align}
Then, \eqref{vdef} gives the boundary values of current input $v(-D,t) =q_{{\rm c}}(t)/k$ and delayed input $v(0,t)= q_{{\rm c}}(t-D)/k$, and $v(x,t)$ satisfies a transport PDE. Hence, the coupled $(v,u,X)$-system is described as
\begin{align}\label{orisys1}
v_t(x,t) =& -v_x(x,t),\quad -D<x<0\\
\label{orisys2}v(-D,t) =& q_{{\rm c}}(t)/k, \\
u_x(0,t) =& -v(0,t),\\
\label{orisys4} u_{t}(x,t) =& \alpha u_{xx}(x,t),  \quad 0<x<s(t)\\
u(s(t),t) =&0,\\
\label{orisys6}\dot{X}(t) =& -\beta u_{x}(s(t),t). 
\end{align}
Now, the control objective is to design $q_{{\rm c}}(t)$ to stabilize the coupled $(v,u,X)$-system at the origin. 
\subsection{Direct transformation}
We consider backstepping transformations for the coupled PDEs-ODE system as
\begin{align}\label{eq:DB1}
w(x,t)=&u(x,t)-\frac{c}{\alpha} \int_{x}^{s(t)} (x-y)u(y,t) dy - \frac{c}{\beta}(x-s(t)) X(t), \\
\label{eq:DB2}z(x,t) =& v(x,t) + c \int_{x}^{0} v(y,t) dy+ \frac{c}{\alpha} \int_{0}^{s(t)}  u(y,t) dy + \frac{c}{\beta} X(t). 
\end{align}
The transformation \eqref{eq:DB1} is the same nonlinear transformation as the one proposed in \cite{Shumon16} for delay-free Stefan problem. The formulation of \eqref{eq:DB2} is motivated by a design in fixed domain introduced in \cite{krstic2009}. Taking derivatives of \eqref{eq:DB1} and \eqref{eq:DB2} in $x$ and $t$ along with the solution of the system \eqref{orisys1}--\eqref{orisys6}, we have 
\begin{align}\label{wxder}
w_{x}(x,t)=&u_{x}(x,t)-\frac{c}{\alpha} \int_{x}^{s(t)} u(y,t) dy - \frac{c}{\beta} X(t), \\
\label{zxder}z_{x}(x,t) =& v_{x}(x,t) - c v(x,t) , \\
\label{ztder}z_{t}(x,t) =& -v_{x}(x,t) - c \int_{x}^{0} v_{y}(y,t) dy + c \int_{0}^{s(t)}  u_{yy}(y,t) dy - c u_{x}(s(t),t), \notag\\
=& -v_{x}(x,t) + c v(x,t) . 
\end{align}
By \eqref{zxder} and \eqref{ztder}, we get $z_t(x,t) = -z_x(x,t)$. In addition, by substituting $x=0$ in \eqref{eq:DB2} and \eqref{wxder}, $w_x(0,t) = -z(0,t)$ holds. On the other hand, because $w$ transformation does not depend on $v$, $w$ system is not changed from the delay-free target system given in \cite{Shumon16}. Thus, the target $(z,w,X)$-system is obtained by
\begin{align}\label{tar1}
z_t(x,t) =& -z_x(x,t),\quad -D<x<0\\
\label{tar2}z(-D,t) =& 0, \\
\label{tar3}w_x(0,t) =& -z(0,t),\\
\label{tar4}w_{t}(x,t) =& \alpha w_{xx}(x,t) + \frac{c}{\beta} \dot{s}(t) X(t),  \quad 0<x<s(t)\\
w(s(t),t) =&0,\\
\label{tarODE}\dot{X}(t) =& -cX(t) -\beta w_{x}(s(t),t). 
\end{align}
The control design is achieved through evaluating \eqref{eq:DB2} at $x=-D$ together with the boundary conditions \eqref{orisys2} and \eqref{tar2}, which yields  
\begin{align}\label{precontrol}
q_{c}(t) =& - c k \left(\int_{-D}^{0} v(y,t) dy + \frac{1}{\alpha} \int_{0}^{s(t)}  u(y,t) dy + \frac{1}{\beta} X(t) \right). 
\end{align}
 Finally, substituting the definitions \eqref{state} and \eqref{vdef} in \eqref{precontrol}, the control law \eqref{delaycomp} is obtained. 
 
 In a similar manner, the inverse transformations are obtained by 
\begin{align}\label{inv1}
u(x,t) =& w(x,t) + \frac{\beta}{\alpha} \int_x^{s(t)} \psi(x-y) w(y,t) dy + \psi(x-s(t))X(t), \\
\label{inv2}v(x,t) =& z(x,t) - \int_{x}^{0} \mu (x-y) z(y,t) dy -\frac{\beta}{\alpha} \mu(x) \int_{0}^{s(t)} \zeta(y)  w(y,t) dy - \zeta(s(t)) \mu(x) X(t), 
\end{align}
where 
\begin{align} \label{psidef} 
\psi(x) =&  \frac{\sqrt{c\alpha}}{\beta} {\rm sin}\left( \sqrt{\frac{c}{\alpha}}x\right), \\
\label{mudef} \mu (x) =& ce^{cx}, \quad \zeta(x) = \frac{1}{\beta} {\rm cos}\left( \sqrt{\frac{c}{\alpha}}x\right). 
\end{align}

\section{Physical constraints}\label{sec:constraints}
Noting that  \eqref{valid2} should hold since $q_{{\rm c}}(t)>0$ is required by Remark \ref{assumption} and Lemma \ref{lem1} to satisfy \eqref{valid1}, the overshoot beyond the setpoint $s_{{\rm r}}$ is prohibited to achieve the control objective $s(t) \to s_{{\rm r}}$, i.e.  $s(t)<s_{{\rm r}}$ is required to be satisfied for $\forall t>0$. In this section, we prove that the closed-loop system with the proposed control law \eqref{delaycomp} guarantees $q_{{\rm c}}(t)>0$ and $s(t)<s_{{\rm r}}$ for $\forall t>0$, namely "physical constraints".
\begin{lemma}\label{Prop1}
With Assumption \ref{assum1} and \ref{assum2}, the control law (14) for the system (1)-(5) generates a positive input signal, i.e.,  
\begin{align}\label{constraint1}
q_{{\rm c}}(t)>&0, \quad \forall t>0. 
\end{align}
\end{lemma}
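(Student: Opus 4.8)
The plan is to exploit the fact that the quantity in parentheses in the feedback law \eqref{delaycomp} is, up to the additive constant $-\tfrac{k}{\beta}s_{\rm r}$, exactly the conserved total energy of the plant, so that the control law closes into an elementary scalar linear ODE that can be integrated explicitly. To this end I would first introduce the shorthand
\begin{align}
E(t) := \int_{t-D}^{t} q_{{\rm c}}(\theta)\, d\theta + \frac{k}{\alpha} \int_{0}^{s(t)} (T(x,t) - T_{{\rm m}})\, dx + \frac{k}{\beta}\bigl(s(t) - s_{{\rm r}}\bigr),
\end{align}
so that the closed-loop definition \eqref{delaycomp} reads simply $q_{{\rm c}}(t) = -c\,E(t)$.

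Next I would differentiate $E$ in time. Since $s_{\rm r}$ is constant, $E(t)$ differs from the total energy appearing in the conservation law \eqref{energy} only by an additive constant, so \eqref{energy} gives at once $\dot{E}(t) = q_{{\rm c}}(t)$; note in particular that the delayed terms $q_{{\rm c}}(t-D)$ cancel inside \eqref{energy}, so this is a genuine ODE and not a delay equation. Substituting the feedback relation $q_{{\rm c}}(t) = -c\,E(t)$ then yields the closed scalar ODE $\dot{E}(t) = -c\,E(t)$, whose unique solution is $E(t) = E(0)\,e^{-ct}$. Consequently $q_{{\rm c}}(t) = -c\,E(0)\,e^{-ct}$, and since $c>0$ and $e^{-ct}>0$, the sign of $q_{{\rm c}}$ on $(0,\infty)$ is determined entirely by the single number $E(0)$.

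It then remains to verify $E(0)<0$. Evaluating $E$ at $t=0$ with the initial data \eqref{eq:stefanIC} gives
\begin{align}
E(0) = \frac{k}{\beta}\left[ (s_0 - s_{{\rm r}}) + \beta\left( \frac{1}{k}\int_{-D}^{0} q_{{\rm c}}(\theta)\, d\theta + \frac{1}{\alpha}\int_{0}^{s_0} (T_0(x) - T_{{\rm m}})\, dx \right) \right],
\end{align}
and the setpoint restriction \eqref{setpoint} in Assumption \ref{assum2} is precisely the statement that the bracketed quantity is strictly negative. Hence $E(0)<0$, and therefore $q_{{\rm c}}(t) = -c\,E(0)\,e^{-ct} > 0$ for all $t>0$, which is \eqref{constraint1}. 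Assumption \ref{assum1} enters only to keep the model valid on the initial window $t\in(0,D)$, where the delayed input $q_{{\rm c}}(t-D)$ equals the prescribed nonnegative past input, so that the plant equations and hence \eqref{energy} are legitimately available throughout.

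The computation is short, so I expect the only real obstacle to be conceptual: recognizing that the argument of the feedback law is merely an affine shift of the conserved energy, so that the closed loop self-referentially collapses to $\dot{E}=-cE$. Once this is seen, positivity follows from a one-line exponential-decay estimate rather than from any maximum-principle or PDE comparison argument, and the setpoint condition \eqref{setpoint} is revealed to be exactly the sharp threshold that forces $E(0)<0$.
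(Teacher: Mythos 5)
Your proposal is correct and follows essentially the same route as the paper: the paper likewise differentiates the control law along the plant's solution (equivalently, uses the energy balance \eqref{energy}) to obtain $\dot{q}_{\rm c}(t)=-c\,q_{\rm c}(t)$, integrates to $q_{\rm c}(t)=q_{\rm c}(0)e^{-ct}$, and invokes Assumption \ref{assum2} to get $q_{\rm c}(0)>0$. Your only addition is to spell out explicitly that the setpoint restriction \eqref{setpoint} is exactly the condition $E(0)<0$, a detail the paper states without elaboration.
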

\begin{proof}
Taking the time derivative of \eqref{delaycomp} together with the solution of \eqref{eq:stefanPDE}--\eqref{eq:stefanODE}, we obtain
\begin{align}\label{dercont}
\dot{q}_{{\rm c}}(t) = -c q_{{\rm c}}(t). 
\end{align}
The differential equation \eqref{dercont} yields $q_{{\rm c}}(t) = q_{{\rm c}}(0) e^{-ct}$. Additionally, Assumption \ref{assum2} leads to $q_{{\rm c}}(0)>0$. Thus, the positivity of the controller \eqref{constraint1} is satisfied. 
\end{proof}
\begin{lemma}\label{lem:temp}
The following property of liquid temperature profile holds: 
\begin{align}
\label{constraint2}T(x,t) \geq& T_{{\mathrm m}} \quad \textrm{ for all }\quad x \in [0,s(t)]. 
\end{align}
\end{lemma}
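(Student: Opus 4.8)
The plan is to reduce Lemma \ref{lem:temp} to the already-established Lemma \ref{lem1}. That lemma states that, under Assumption \ref{initial}, the model validity condition \eqref{valid1} (which is precisely the assertion \eqref{constraint2}) holds provided the delayed input is nonnegative, i.e. $q_{{\rm c}}(t-D) \geq 0$ for all $t>0$. Thus the entire task collapses to verifying this single hypothesis on the relevant time axis, after which one invokes Lemma \ref{lem1} directly.

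First I would rewrite the hypothesis via the substitution $\theta = t - D$: the condition $q_{{\rm c}}(t-D)\geq 0$ for all $t>0$ is equivalent to $q_{{\rm c}}(\theta) \geq 0$ for all $\theta > -D$. I would then split this half-line at the origin. On the interval $(-D,0)$, nonnegativity is furnished directly by Assumption \ref{assum1}. On $[0,\infty)$, it follows from the solution of \eqref{dercont} obtained in Lemma \ref{Prop1}, namely $q_{{\rm c}}(\theta) = q_{{\rm c}}(0)\, e^{-c\theta}$ together with $q_{{\rm c}}(0) > 0$, which gives $q_{{\rm c}}(\theta) > 0 $ there. Patching the two regimes yields $q_{{\rm c}}(\theta) \geq 0$ for every $\theta > -D$, which is exactly the hypothesis required by Lemma \ref{lem1}.

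With that hypothesis secured, applying Lemma \ref{lem1} immediately delivers $T(x,t) \geq T_{{\rm m}}$ for all $x \in [0,s(t)]$, establishing \eqref{constraint2}. I do not anticipate any genuine obstacle here: the substantive analytic content, the maximum-principle argument guaranteeing that a nonnegative boundary flux keeps the temperature above $T_{{\rm m}}$, is encapsulated inside Lemma \ref{lem1}, which we are entitled to assume. The only point demanding mild care is the bookkeeping that stitches together the past-input regime $(-D,0)$ controlled by Assumption \ref{assum1} with the closed-loop regime $[0,\infty)$ controlled by Lemma \ref{Prop1}, so that the sampled delayed signal $q_{{\rm c}}(t-D)$ is nonnegative at every instant it feeds into the boundary condition \eqref{eq:stefancontrol}.
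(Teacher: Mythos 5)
Your proposal is correct and follows exactly the paper's argument: the paper likewise combines Lemma \ref{Prop1} (positivity of $q_{{\rm c}}$ for $t>0$) with Assumption \ref{assum1} (nonnegativity on $(-D,0)$) and then invokes Lemma \ref{lem1}. Your write-up merely makes the time-axis bookkeeping explicit, which the paper leaves implicit.
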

\begin{proof}
Applying Lemma \ref{Prop1} with Assumption \ref{assum1} to Lemma \ref{lem1}, \eqref{constraint2} is shown directly. 
\end{proof}
\begin{lemma}\label{lem:interface}
The following properties of the moving interface hold:
\begin{align}
\label{constraint3}\dot{s}(t) \geq& 0, \quad \forall t>0, \\
\label{constraint4}s_0<&s(t)<s_{{\rm r}}, \quad \forall t>0. 
\end{align}
\end{lemma}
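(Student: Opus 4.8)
The plan is to handle the three inequalities in turn, relying on the positivity facts already established rather than on any new energy or Lyapunov estimate. For the monotonicity statement \eqref{constraint3}, I would invoke Lemma \ref{lem:temp}, which is precisely the model validity condition \eqref{valid1}, and then apply Lemma \ref{monoinc}; this delivers $\dot{s}(t)\geq 0$ at once.

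For the lower bound in \eqref{constraint4}, integrating $\dot{s}(t)\geq 0$ over $[0,t]$ gives $s(t)\geq s_0$. To sharpen this to the strict inequality $s_0<s(t)$ for $t>0$, I would note that $u=T-T_{{\rm m}}\geq 0$ attains its minimum value $0$ along the interface $x=s(t)$, so Hopf's Lemma forces $u_x(s(t),t)<0$ whenever the flux is active; since $q_{{\rm c}}(t)>0$ for all $t>0$ by Lemma \ref{Prop1}, this yields $\dot{s}(t)=-\beta u_x(s(t),t)>0$ and hence strict growth away from $s_0$.

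The substantive part is the upper bound, and the key is that the control law \eqref{delaycomp} already encodes it. Solving \eqref{delaycomp} for the interface error gives
\begin{equation*}
\frac{k}{\beta}\bigl(s_{{\rm r}}-s(t)\bigr)=\frac{q_{{\rm c}}(t)}{c}+\int_{t-D}^{t}q_{{\rm c}}(\theta)\,d\theta+\frac{k}{\alpha}\int_{0}^{s(t)}\bigl(T(x,t)-T_{{\rm m}}\bigr)\,dx .
\end{equation*}
Every term on the right is positive for $t>0$: the first since $q_{{\rm c}}(t)>0$ by Lemma \ref{Prop1} and $c>0$; the integral of the delayed input since $q_{{\rm c}}(\theta)>0$ for $\theta>0$ (Lemma \ref{Prop1}) and $q_{{\rm c}}(\theta)\geq 0$ for $\theta\in(-D,0)$ (Assumption \ref{assum1}); and the spatial integral by the temperature bound \eqref{constraint2} of Lemma \ref{lem:temp}. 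Hence the right-hand side is strictly positive and $s(t)<s_{{\rm r}}$ follows immediately.

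The main obstacle is conceptual rather than computational: recognizing that the upper bound need not be proved through a comparison principle but is built into the feedback law, so that after the single rearrangement above the conclusion reduces to collecting the sign information supplied by Lemmas \ref{Prop1} and \ref{lem:temp} together with Assumption \ref{assum1}. The only other point requiring care is upgrading the non-strict bound of Lemma \ref{monoinc} to the strict lower bound, which is where Hopf's Lemma enters.
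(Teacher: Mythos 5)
Your proof is correct and follows essentially the same route as the paper: monotonicity via Lemma \ref{lem:temp} combined with Lemma \ref{monoinc}, and the upper bound by rearranging the control law \eqref{delaycomp} and reading off the signs of its terms from Lemma \ref{Prop1}, Assumption \ref{assum1}, and Lemma \ref{lem:temp}. Your extra step of invoking Hopf's Lemma to upgrade $s(t)\geq s_0$ to the strict inequality $s_0<s(t)$ is a welcome refinement of a point the paper passes over without comment, but it does not change the overall argument.
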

\begin{proof}
Applying Lemma \ref{lem:temp} to Lemma \ref{monoinc}, \eqref{constraint3} is derived. Then, the condition \eqref{constraint3} leads to $s_0 < s(t)$. Finally, applying \eqref{constraint1} and \eqref{constraint2} to the control law \eqref{delaycomp}, \eqref{constraint4} is derived. 
\end{proof}

\section{Stability Analysis}\label{stability}
In this section, we derive the conclusion of Theorem \ref{thm1} by applying Lyapunov analysis and showing the norm equivalence with the help of \eqref{constraint3} and \eqref{constraint4}.  

\subsection{Change of variable}
Introduce a change of variable 
\begin{align}\label{omedef}
\omega(x,t) = w(x,t) + \left(x - s(t) \right) z(0,t). 
\end{align}
Using \eqref{omedef}, the target $(z,w,X)$-system \eqref{tar1}--\eqref{tarODE} is described by $(z,\omega,X)$-system as 
\begin{align}\label{chsys1}
z(-D,t) =& 0, \\
\label{chsys2}z_t(x,t) =& -z_x(x,t),\quad -D<x<0\\
\label{chsys3}\omega_x(0,t) =& 0,\\
\omega_{t}(x,t) =& \alpha \omega_{xx}(x,t) - \left(x - s(t) \right) z_{x}(0,t) + \dot{s}(t) \left( \frac{c}{\beta} X(t) - z(0,t)\right) ,  \quad 0<x<s(t)\label{chsys4}\\
\label{chsys5}\omega(s(t),t) =&0,\\
\label{chsys6}\dot{X}(t) =& -cX(t) -\beta (\omega_{x}(s(t),t)- z(0,t)). 
\end{align}

\subsection{Stability analysis of $(z, \omega, X)$-system}
Firstly, we prove the exponential stability of the $(z, \omega, X)$-system. Let $V_1$ be the functional defined by
\begin{align}\label{V1def}
V_1 = \int_{-D}^{0} e^{- mx }z_{x}(x,t)^2 dx, 
\end{align}
where $m>0$ is a positive parameter. \eqref{V1def} satisfies
\begin{align}
||z_{x}||_{{ L}_2 (-D,0)}^2 \leq V_1 \leq e^{mD} ||z_{x}||_{{ L}_2 (-D,0)}^2.
\end{align} 
Note that \eqref{chsys1} yields $z_{x}(-D,t) = 0$ through taking the time derivative and applying PDE \eqref{chsys2}. With the help of it, taking the time derivative of \eqref{V1def} together with \eqref{chsys1}-\eqref{chsys2} leads to 
\begin{align}\label{V1der}
\dot{V}_1 =& -2  \int_{-D}^{0} e^{- mx }z_{x}(x,t)z_{xx}(x,t) dx \notag\\
=& - e^{- mx }z_{x}(x,t)^2|_{x=-D}^{x=0} + \int_{-D}^{0} \left(\frac{d}{dx} e^{- mx } \right) z_{x}(x,t)^2 dx \notag\\
=& -  z_{x}(0,t)^2  - m \int_{-D}^{0} e^{-mx }z_{x}(x,t)^2 dx. 
\end{align}
Let $V_2$ be the functional defined by
\begin{align}\label{V2def}
V_2 =\frac{1}{2} \left( \frac{1}{s_r^2} ||\omega ||_{{ L}_2 (0,s(t))}^2 + ||\omega_{x} ||_{{ L}_2 (0,s(t))}^2 \right) =  \frac{1}{2}\int_0^{s(t)} \left( \frac{1}{s_r^2} \omega(x,t)^2 + \omega_{x}(x,t)^2 \right) dx.
\end{align}
\eqref{V2def} satisfies $\max\{s_r^2,1\}  ||\omega ||_{{\cal H}_1 (0,s(t))}^2 \leq 2 V_2 \leq \max\{1/s_r^2,1\}  ||\omega ||_{{\cal H}_1 (0,s(t))}^2 $   Note that taking the total time derivative of \eqref{chsys5} yields $\omega_{t}(s(t),t) = - \dot{s}(t) \omega_{x}(s(t),t)$. Taking the time derivative of \eqref{V2def} together with \eqref{chsys3}-\eqref{chsys5}, we obtain
\begin{align}\label{V2timeder} 
\dot{V}_2 = &\frac{\dot{s}(t)}{2} \left( \frac{1}{s_r^2} \omega(s(t),t)^2 + \omega_{x}(s(t),t)^2 \right) + \int_0^{s(t)} \left( \frac{1}{s_r^2} \omega(x,t)\omega_{t}(x,t) + \omega_{x}(x,t)\omega_{xt}(x,t) \right) dx \notag\\
= &\frac{\dot{s}(t)}{2}  \omega_{x}(s(t),t)^2  + \frac{1}{s_r^2} \int_0^{s(t)} \omega(x,t) \left( \alpha \omega_{xx}(x,t) - \left(x - s(t) \right) z_{x}(0,t) + \dot{s}(t) \left( \frac{c}{\beta} X(t) - z(0,t)\right) \right) dx \notag\\
&+  \omega_{x}(s(t),t)\omega_{t}(s(t),t) - \omega_{x}(0,t)\omega_{t}(0,t) -  \int_0^{s(t)} \omega_{xx}(x,t)\omega_{t}(x,t) dx \notag\\
=&- \frac{\alpha}{s_r^2} ||\omega_{x}||_{{L}_2 (0,s(t))}^2 - \frac{1}{s_r^2} z_{x}(0,t) \int_0^{s(t)} \left(x - s(t) \right) \omega(x,t) dx + \frac{\dot{s}(t)}{s_r^2} \left(\frac{c}{\beta} X(t) - z(0,t)\right) \int_0^{s(t)} \omega(x,t) dx \notag\\
& -\alpha ||\omega_{xx}||_{L_2 (0,s(t))}^2 +  z_{x}(0,t)\omega(0,t)-\frac{\dot{s}(t)}{2}\omega_x(s(t),t)^2 - \dot{s}(t) \left( \frac{c}{\beta} X(t) - z(0,t)\right)\omega_{x}(s(t),t). 
\end{align} 
Applying Young's and Cauchy Schwarz inequalities to the second terms on the first and second line of the  \eqref{V2timeder} with the help of \eqref{constraint4} yields  
\begin{align} \label{young11}
\bigg| z_{x}(0,t) \int_0^{s(t)} \left(x - s(t) \right) \omega(x,t) dx \bigg| &\leq \frac{\gam_1}{2} z_{x}(0,t)^2 + \frac{1}{2\gam_1} \left( \int_0^{s(t)} \left(x - s(t) \right) \omega(x,t) dx \right)^2, \notag\\
& \leq \frac{\gam_1}{2} z_{x}(0,t)^2 + \frac{1}{2\gam_1}  \left( \int_0^{s(t)} \left(x - s(t) \right)^2 dx \right) \left( \int_0^{s(t)} \omega(x,t)^2 dx \right), \notag\\
& \leq \frac{\gam_1}{2} z_{x}(0,t)^2 + \frac{s_r^3}{6\gam_1 }||\omega ||_{{ L}_2 (0,s(t))}^2, \notag\\
& \leq \frac{\gam_1}{2} z_{x}(0,t)^2 + \frac{2s_r^5}{3\gam_1 }||\omega_{x} ||_{{ L}_2 (0,s(t))}^2, 
\end{align} 
\begin{align} \label{young12} 
 |z_{x}(0,t)\omega(0,t)|& \leq \frac{\gam_2}{2} z_{x}(0,t)^2 + \frac{1}{2 \gam_2} \omega(0,t)^2, \notag\\
 &  \leq \frac{\gam_2}{2} z_{x}(0,t)^2 + \frac{2 s_r}{ \gam_2} ||\omega_{x}||_{{ L}_2 (0,s(t))}^2, 
 \end{align} 
where we utilized Poincare's inequality $||\omega ||_{{ L}_2 (0,s(t))}^2 \leq 4 s_r^2 ||\omega_{x} ||_{{ L}_2 (0,s(t))}^2$ and Agmon's inequality $ \omega(0,t)^2 \leq 4 s_r ||\omega_{x}||_{{ L}_2 (0,s(t))}^2$, and $\gam_1 >0$ and $\gamma_{2}>0$ are positive parameters to be determined. Hence, applying \eqref{young11} and \eqref{young12} to \eqref{V2timeder} with the choice of $\gam_1 = \frac{8 s_r^5}{3 \alpha} $ and $\gam_2 = \frac{8 s_r^3}{\alpha}$, the following differential inequality is deduced 
\ba 
\dot{V}_2 \leq& - \frac{\alpha}{2} ||\omega_{xx}||_{L_2 (0,s(t))}^2 - \frac{\alpha}{2 s_r^2} ||\omega_{x}||_{L_2 (0,s(t))}^2   + \frac{16 s_{{\rm r}}^3}{3 \alpha} z_{x}(0,t)^2  \notag\\
& +  \dot{s}(t) \left( 2 \frac{c^2}{\beta^2} X(t)^2 + 2 z(0,t)^2  + \frac{1}{2s_r^3}  ||\omega||_{L_2 (0,s(t))}^2\right) .\label{V2der}
\end{align}
Let $V_3$ be the functional defined by
\begin{align}\label{V3def}
V_3=\frac{1}{2}X(t)^2. 
\end{align}
Taking the time derivative of \eqref{V3def} and applying Young's and Agmon's inequalities, we obtain 
\begin{align}
\dot{V}_3=&-cX(t)^2-\beta X(t)  (\omega_{x}(s(t),t)- z(0,t)) \notag\\
\leq &-\frac{c}{2}X(t)^2+\frac{4\beta^2 s_{{\rm r}}}{c} ||\omega_{xx}||_{{ L}_2 (0,s(t))}^2+ \frac{4D\beta^2}{c} ||z_{x}||_{{ L}_2 (-D,0)}^2. \label{V3der}
\end{align}
Let $V$ be the functional defined by 
\begin{align}
V =qV_1 + V_2 + pV_3, 
\end{align}
where $q>0$ and $p>0$ are positive parameters to be determined. Combining \eqref{V1der}, \eqref{V2der}, and \eqref{V3der}, we get 
\begin{align}
\dot{V} \leq&  - \frac{\alpha}{2} \left(1  - \frac{8p\beta^2 s_{{\rm r}}}{c\alpha } \right) ||\omega_{xx}||_{L_2 (0,s(t))}^2 - \frac{\alpha}{2 s_r^2} ||\omega_{x}||_{L_2 (0,s(t))}^2   - \left( q -  \frac{16 s_{{\rm r}}^3}{3 \alpha} \right) z_{x}(0,t)^2  \notag\\
&  - m \left( q -  p \frac{4D\beta^2}{m c} \right) ||z_{x}||_{{ L}_2 (-D,0)}^2 -\frac{pc}{2}X(t)^2 \notag\\
& +  \dot{s}(t) \left( 2 \frac{c^2}{\beta^2} X(t)^2 + 2 z(0,t)^2  + \frac{1}{2s_r^3} \int_0^{s(t)} \omega(x,t)^2 dx\right) . \label{Vdotineq} 
  \end{align}
  Hence, by choosing the parameters as 
  \ba 
  p=  \frac{ c\alpha}{16 \beta^2 s_{{\rm r}}}, \quad  q = \max\left\{ \frac{16 s_{{\rm r}}^3}{3 \alpha}, \frac{D \alpha }{ 2m s_r}\right\} , 
  \end{align} 
  the inequality \eqref{Vdotineq} leads to 
  \begin{align}
\dot{V} \leq&  - \frac{\alpha}{4} ||\omega_{xx}||_{L_2 (0,s(t))}^2 - \frac{\alpha}{2 s_r^2} ||\omega_{x}||_{L_2 (0,s(t))}^2   - m \left( q -  p \frac{4D\beta}{m c} \right) ||z_{x}||_{{ L}_2 (-D,0)}^2 -\frac{pc}{2}X(t)^2 \notag\\
& +  \dot{s}(t) \left( 2 \frac{c^2}{\beta^2} X(t)^2 + 2 z(0,t)^2  + \frac{1}{2s_r^3} \int_0^{s(t)} \omega(x,t)^2 dx\right),  \notag\\
 \leq&  - \frac{\alpha}{ 8 s_r^2} V_2   - \frac{m q }{ 2} e^{-mD} V_1 -\frac{pc}{2}X(t)^2 +  \dot{s}(t) \left(  \frac{ 4c}{ \beta^2} V_3 + 8 D V_1  + \frac{1}{s_r} V_2\right) , 
  \end{align}
from which we obtain the form of
 \begin{align}
\dot{V} \leq & -bV + a\dot{s}(t) V, \label{Vder}
\end{align}
where 
\begin{align} 
b = \min \left\{\frac{m}{2} e^{-mD}, \frac{ \alpha}{8s_{{\rm r}}^2 } , c\right\}, \quad a = \max \left\{ \frac{8D}{q}, \frac{1}{s_{{\rm r}}}, \frac{ 4c^2}{p \beta^2} \right\}. 
\end{align} 
The differential inequality \eqref{Vder} does not directly lead to the exponential decay of the norm. To deal with it, we consider the following norm (of which the equivalence with $V$ follows from Lemma \ref{lem:interface})
\begin{align}\label{Wdef}
W = V e^{-a s(t)}. 
\end{align}
Taking the time derivative of \eqref{Wdef} with the help of \eqref{Vder}, we have 
\begin{align}\label{Wder}
\dot{W} = \left(\dot{V} - a\dot{s}(t) V \right) e^{-as(t)} \leq - bW. 
\end{align}
Hence, it leads to $W(t) \leq W(0) e^{-bt}$. Substituting \eqref{Wdef} and applying \eqref{constraint4}, the exponential stability of $(z, \omega, X)$-system is shown as
\begin{align}\label{Vexp}
V(t) \leq V(0) e^{as_{{\rm r}}} e^{-bt}. 
\end{align} 
\subsection{Stability analysis of $(z, w, X)$-system} \label{sec:stability2} 

Taking the square of \eqref{omedef} and applying Young's and Cauchy Schwarz inequality, we obtain
\begin{align}\label{omebound1}
|| \omega||_{{\cal H}_1 (0,s(t))}^2  \leq &2 || w||_{{\cal H}_1 (0,s(t))}^2 + K_{1} || z_{x}||_{{L}_2 (-D,0)}^2, \\
\label{omebound2}||w||_{{\cal H}_1 (0,s(t))}^2  \leq &2 || \omega ||_{{\cal H}_1 (0,s(t))}^2  + K_{1} || z_{x}||_{{ L}_2 (-D,0)}^2, 
\end{align}
where $K_{1} = \frac{8D s_{{\rm r}}^3}{3} +8D s_{{\rm r}}$. Consider the following norm 
\begin{align} \label{Pidef} 
\Pi (t)= ||z_x||_{{ L}_2 (-D,0)}^2 + ||w||_{{\cal H}_1 (0,s(t))}^2 + X(t)^2. 
\end{align}
Then, recalling $||z_{x}||_{{ L}_2 (-D,0)}^2 \leq V_1 \leq e^{mD} ||z_{x}||_{{ L}_2 (-D,0)}^2 $ and $K_2  ||\omega ||_{{\cal H}_1 (0,s(t))}^2 \leq 2 V_2 \leq K_3  ||\omega ||_{{\cal H}_1 (0,s(t))}^2$ where $ K_2 = \max\{s_r^2,1\}$ and $K_3 = \max\{1/s_r^2,1\}$, applying \eqref{omebound2} to \eqref{Pidef} yields the following bound: 
\begin{align} 
\Pi \leq &  (1+K_1)  ||z_{x}||_{{ L}_2 (-D,0)}^2  +2 ||\omega ||_{{\cal H}_1 (0,s(t))}^2 + X(t)^2, \notag\\
\leq & (1+K_1)  V_1  +4 K_2 V_2 + 2 V_3 . \label{Pibound} 
\end{align}
Moreover, recalling $ V = q V_1 + V_2 + p V_3$ and applying the above inequalities, the following bound on $V$ is derived:
\begin{align} 
V \leq &q e^{mD} || z_{x}||_{{ L}_2 (-D,0)}^2 + \frac{K_3}{2} || \omega||_{{\cal H}_1 (0,s(t))}^2 + \frac{p}{2} X(t)^2,  \notag\\
 \leq &\left(q e^{mD} + \frac{K_1 K_3 }{2} \right) || z_{x}||_{{ L}_2 (-D,0)}^2 + \frac{K_3}{2} || w||_{{\cal H}_1 (0,s(t))}^2   + \frac{p}{2} X(t)^2.  \label{Vbound} 
\end{align} 
Therefore, \eqref{Pibound} and \eqref{Vbound} leads to the following equivalence of the norm $V$ and $\Pi$: 
 \begin{align}\label{piineq}
 \underline{\delta} V(t) \leq \Pi(t) \leq \bar{\delta} V(t),
 \end{align}
where $ \underline{\delta} = \frac{1}{\max \left\{ q e^{mD} + \frac{K_1 K_3 }{2}, K_3, \frac{p}{2} \right\}}$ and $\bar{\delta} = \max \left\{ \frac{1}{q} \left(K_1 + 1\right), 4 K_2, \frac{2}{p} \right\}$. By \eqref{Vexp} and \eqref{piineq}, we have
 \begin{align}\label{Pidecay}
 \Pi (t) \leq \frac{\bar{\delta}}{\underline{\delta} } \Pi(0) e^{as_{{\rm r}}} e^{-bt}, 
 \end{align}
 which yields the exponential stability of $(z, w, X)$-system. 
\subsection{Stability analysis of $(v, u, X)$-system} \label{sec:stability3} 
Taking the spatial derivative of the transformation \eqref{eq:DB2} in $x$ leads to 
\begin{align} \label{zxvx}
z_{x}(x,t) = v_{x}(x,t) - c v(x,t).
\end{align} 
 Taking the square on both sides of \eqref{zxvx} and applying Young's inrquality, the following bound is obtained:
\begin{align}\label{zxbound}
||z_{x}||_{{ L}_2 (-D,0)}^2 \leq 2 ||v_{x}||_{{ L}_2 (-D,0)}^2 + 2 c^2  ||v||_{{ L}_2 (-D,0)}^2 . 
\end{align}
By the inverse transformation \eqref{inv2} and Poincare's inequality, we have 
 \begin{align}\label{vL2}
  ||v||_{{ L}_2 (-D,0)}^2 \leq& N_1 ||z_{x}||_{{ L}_2 (-D,0)}^2 +N_2 || w||_{{ L}_2 (0,s(t))}^2 + N_3 X(t)^2,
\end{align}
where $N_1 = 16D^2 \left(1 + \frac{cD}{2} (1-e^{-2D})\right)$, $N_2 = \frac{2 c s_{{\rm r}}}{ \alpha^2} (1-e^{-2D})$, and $N_3 =  \frac{2c}{ \beta^2} (1-e^{-2D})$. Also, by rewriting \eqref{zxvx} as $v_x(x,t) = z_x(x,t) + c v(x,t)$, the following inequality is derived:
\begin{align}\label{vH1before}
||v||_{{\cal H}_1 (-D,0)}^2  \leq  2 ||z_x||_{{ L}_2 (-D,0)}^2 + (2c^2+1) ||v||_{{ L}_2 (-D,0)}^2.
\end{align}
 Combining \eqref{vL2} with \eqref{vH1before}, the following inequality holds
 \begin{align}\label{vH1}
  ||v||_{{\cal H}_1 (-D,0)}^2 \leq & (2+(2c^2+1) N_1)  ||z_x||_{{ L}_2 (-D,0)}^2 +  (2c^2+1) (N_2 || w||_{{ L}_2 (0,s(t))}^2  + N_3 X(t)^2). 
 \end{align}
 Moreover, as shown in \cite{Shumon16}, there exist positive constants $M_{i}>0$ for $i = 1, 2, 3, 4$ such that 
\begin{align}\label{bound1}
||w||_{{\cal H}_1 (0,s(t))}^2 &\leq  M_1 ||u||_{{\cal H}_1 (0,s(t))}^2 +M_2 X(t)^2,\\
\label{bound3}||u||_{{\cal H}_1 (0,s(t))}^2  &\leq  M_3 ||w||_{{\cal H}_1 (0,s(t))}^2 + M_4 X(t)^2. 
\end{align}
Then, adding \eqref{zxbound} to \eqref{bound1} and \eqref{vH1} to \eqref{bound3}, we have 
\begin{align}\label{zwnorm}
 ||z_{x}||_{{ L}_2 (-D,0)}^2  + || w|| _{{\cal H}_1 (0,s(t))}^2 + X(t)^2 \leq& 2 ||v_x||_{{ L}_2 (-D,0)}^2 + 2c^2 ||v||_{{ L}_2 (-D,0)}^2 + M_1 ||u||_{{\cal H}_1 (0,s(t))}^2 + (M_2 + 1) X(t)^2, \\
\label{vunorm}
||v||_{{\cal H}_1 (-D,0)}^2  + ||u||_{{\cal H}_1(0,s(t))}^2  + X(t)^2  \leq &  L_1  ||z_x||_{{ L}_2 (-D,0)}^2 + L_2 || w||_{{\cal H}_1 (0,s(t))}^2   + L_3X(t)^2,
 \end{align}
 where $L_1 = 2+(2c^2+1) N_1$, $L_2 = (2c^2+1)N_2+M_3$, and $L_3 = (2c^2+1) N_3+M_4+1$. 
 Define the following norm
 \begin{align}
 \Xi(t) =&  ||v||_{{\cal H}_1 (-D,0)}^2  + || u||_{{\cal H}_1 (0,s(t))}^2 + X(t)^2. 
 \end{align}
Then, \eqref{zwnorm} and \eqref{vunorm} leads to 
\begin{align}\label{XiPi}
\underline{M} \Xi(t) \leq \Pi(t) \leq \bar M \Xi(t), 
\end{align}
where $\bar M = \max \left\{2, 2c^2, M_1, M_2 + 1\right\}$, $\underline{M} =\frac{1}{ \max \left\{ L_1, L_2, L_3 \right\} }$. Finally, applying \eqref{XiPi} to \eqref{Pidecay} we arrive at 
\begin{align}
\Xi(t) \leq \frac{\bar M}{\underline{M}} \frac{\bar \delta}{\underline{\delta}} \Xi(0) e^{a s_{{\rm r}}} e^{-bt}, \label{finallyap} 
\end{align}
which completes the proof of Theorem \ref{thm1}. 

\section{Relation between the designed control law and a state prediction} \label{sec:prediction} 
As developed in some literature for ODE systems, the delay compensated control via the method of backstepping is known to be equivalent to the predictor-based feedback where the control law is derived to stabilize the future state called "predictor state", see Section 2 in \cite{krstic2009delay} for instance. Hence, one might have a question whether our delay compensated control is also equivalent to the predictor-based feedback.  This is not a trivial question in the case of Stefan problem due to the complicated structure of ODE dynamics whose state is the domain of the PDE.

The nominal control design for delay-free Stefan problem developed in \cite{Shumon16} is given by 
\begin{align}\label{uncomp}
\bar q_{{\rm c}}(t) =& - c  \left( \frac{k}{\alpha} \int_{0}^{s(t)}  (T(x,t) - T_{{\rm m}}) dx + \frac{k}{\beta} (s(t) -s_{{\rm r}}) \right),  
\end{align}
where we defined the notation $\bar q_{{\rm c}}(t)$ to distinguish with the delay compensated control law \eqref{delaycomp}. Thus, our interest lies in proving $q_{{\rm c}}(t) \equiv \bar q_{{\rm c}}(t+D)$ because $ \bar q_{{\rm c}}(t+D)$ is the prediction of the nominal control. We start from the expression of $\bar q_{{\rm c}}(t+D)$ which can be described as 
\begin{align}
	\bar q_{{\rm c}}(t+D) =& - c  \left( \frac{k}{\alpha} \int_{0}^{s(t+D)}  (T(x,t+D) - T_{{\rm m}}) dx + \frac{k}{\beta} (s(t+D) -s_{{\rm r}}) \right). \label{predict0}
\end{align}
Integrating ODE dynamics $\dot{s}(t) = - \beta T_{x}(s(t),t)$ given in \eqref{eq:stefanODE} from $t$ to $t+D$ yields 
\begin{align}
s(t+D) = s(t) - \beta \int_t^{t+D} T_{x}(s(\tau), \tau) d \tau . 	\label{predict1} 
\end{align}
Next, integrating PDE dynamics $T_{t} = \alpha T_{xx}$ given in \eqref{eq:stefanPDE} in time from $t$ to $t+D$ leads to $T(x,t+D) = T(x,t) + \alpha \int_t^{t+D} T_{xx}(x,\tau) d\tau$. Furthermore, integrating the both sides in space from $0$ to $s(t+D)$, we obtain
\begin{align}
\int_{0}^{s(t+D)}  (T(x,t+D) - T_{{\rm m}}) dx =& \int_{0}^{s(t+D)}  (T(x,t) - T_{{\rm m}}) dx + \alpha \int_{0}^{s(t+D)}  \int_t^{t+D} T_{xx}(x,\tau) d\tau dx, \notag\\
= & \int_{0}^{s(t+D)}  (T(x,t) - T_{{\rm m}}) dx + \alpha \int_t^{t+D} (T_{x}(s(t+D),\tau) - T_{x}(0,\tau) d\tau, \notag\\
=& \int_{0}^{s(t+D)}  (T(x,t) - T_{{\rm m}}) dx + \alpha \int_t^{t+D} T_{x}(s(t+D),\tau) d\tau + \frac{\alpha}{k} \int_{t-D}^{t} q_{c}(\xi) d \xi .\label{predict2} 
\end{align}
Therefore, substituting \eqref{predict1} and \eqref{predict2} into \eqref{predict0}, we get 
\begin{align}
	\bar q_{{\rm c}}(t+D) =& - c  \left( \frac{k}{\alpha} \int_{0}^{s(t+D)}  (T(x,t) - T_{{\rm m}}) dx + k \int_t^{t+D} (T_{x}(s(t+D),\tau) - T_{x}(s(\tau), \tau) )d \tau + \int_{t-D}^{t} q_{c}(\xi) d \xi + \frac{k}{\beta} (s(t) -s_{{\rm r}}) \right). \label{predict3} 
\end{align}
Consequently, it remains to consider the following term 
\begin{align}
\int_t^{t+D} (T_{x}(s(t+D),\tau) - T_{x}(s(\tau), \tau) )d \tau =& \int_t^{t+D} \int_{s(\tau)}^{s(t+D)} T_{xx}(x, \tau)  dx d \tau, \notag\\
= & \frac{1}{\alpha} \int_{s(t)}^{s(t+D)} \int_t^{s^{-1}(x)}  T_{\tau}(x, \tau) d \tau dx, \notag\\
= & \frac{1}{\alpha} \int_{s(t)}^{s(t+D)} \left(T(x,s^{-1}(x)) - T(x,t)\right) dx.  \label{inverses} 
\end{align}
where we switched the order of the integrations in time and space from the first line to the second line with defining the inverse function $s^{-1}(x)$. The existence and uniqueness of $s^{-1}(x)$ is guaranteed due to the continuous and monotonically increasing property of $s(t)$ provided $q_{c}(t)>0$. Thus, boundary condition $T(s(t),t) = T_{m}$, $\forall t\geq 0$ given in \eqref{eq:stefanBC} implies $T(x,s^{-1}(x)) = T_{m}$ from which \eqref{inverses} is given by 
\begin{align}
	\int_t^{t+D} (T_{x}(s(t+D),\tau) - T_{x}(s(\tau), \tau) )d \tau =& - \frac{1}{\alpha} \int_{s(t)}^{s(t+D)} \left(T(x,t) - T_m\right) dx. \label{predict_fin} 
\end{align}
Substituting \eqref{predict_fin} into \eqref{predict3}, we arrive at
\begin{align}
	\bar q_{{\rm c}}(t+D) =& - c  \left( \frac{k}{\alpha} \int_{0}^{s(t)}  (T(x,t) - T_{{\rm m}}) dx  + \int_{t-D}^{t} q_{c}(\xi) d \xi + \frac{k}{\beta} (s(t) -s_{{\rm r}}) \right) \equiv q_{{\rm c}}(t) . 
\end{align}
Therefore, we conclude that the delay compensated control \eqref{delaycomp} is indeed the prediction of the nominal control law \eqref{uncomp}.

\section{Robustness to delay mismatch} \label{sec:robust} 
The results established up to the last section are based on the control design with utilizing the exact value of the actuator delay. However, in practice, there is an error between the exact time delays and the identified delays. Hence, guaranteeing the performance of the controller under the small delay mismatch is important. In this section, $D>0$ is denoted as the identified time delay and $\del D$ is denoted as the delay mismatch (can be either positive or negative), which yields $D + \Delta D$ as the exact time delay from the controller to the plant. Thus, the system we focus on is described by 
\begin{align}\label{robustsys1}
T_{t}(x,t) =& \alpha T_{xx}(x,t) , \quad x \in (0, s(t)) , \\
\label{robustsys2}-k T_{x}(0,t) =& q_{c}(t - (D + \Delta D)) , \\
\label{robustsys3}T(s(t),t) =& T_m , \\
\label{robustsys4}\dot{s}(t) =& - \beta T_{x}(s(t),t) , 
\end{align} 
with the control law given in \eqref{delaycomp} which utilizes the identified delay $D$. Since the control law is not changed, the same backstepping transformation in \eqref{eq:DB1} and \eqref{eq:DB2} can be applied, but the target $(z,w,X)$-system needs to be redescribed due to the modification of \eqref{robustsys2}. The theorem for the robustness to delay mismatch is provided under the restriction on the control gain, as stated in the following. 
\begin{theorem} \label{thm:robust} 
Under Assumptions \ref{initial}-\ref{assum2}, there exists a positive constant $\bar c>0$ such that $\forall c\in(0, \bar c)$ the closed-loop system consisting of the plant \eqref{robustsys1}--\eqref{robustsys4} and the control law \eqref{delaycomp} maintains the model validity \eqref{valid1} and is exponentially stable in the sense of the norm \eqref{h1}. 
\end{theorem}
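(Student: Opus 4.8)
The plan is to mirror the proof of Theorem \ref{thm1} as closely as possible, treating the delay mismatch $\Delta D$ as a boundary perturbation of the target system and then arguing that, for a sufficiently small gain, this perturbation is harmless. Since the control law \eqref{delaycomp} and the transformations \eqref{eq:DB1}--\eqref{eq:DB2} are unchanged, I would keep the transport variable $v(x,t)=q_{\rm c}(t-x-D)/k$ and recompute only its coupling to the diffusion state. The new plant boundary condition \eqref{robustsys2} reads $u_x(0,t)=-q_{\rm c}(t-(D+\Delta D))/k=-v(\Delta D,t)$, so writing $v(\Delta D,t)=v(0,t)-\epsilon(t)$ with
\[
\epsilon(t):=v(0,t)-v(\Delta D,t)=\tfrac1k\bigl(q_{\rm c}(t-D)-q_{\rm c}(t-D-\Delta D)\bigr),
\]
the entire derivation of Section \ref{nonlineartarget} carries over except that the boundary relation \eqref{tar3} becomes $w_x(0,t)=-z(0,t)+\epsilon(t)$; importantly $z(-D,t)=0$ still holds, since it is enforced by the (unchanged) control law, and $z_t=-z_x$ is unaffected. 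Thus the mismatch manifests as a single additive boundary flux $\epsilon(t)$, which using $v_x=z_x+cv$ (cf. \eqref{zxvx}) I can estimate as $|\epsilon(t)|\le\sqrt{|\Delta D|}\,\|v_x\|_{L_2}\le C\sqrt{|\Delta D|}\,(\|z_x\|_{L_2(-D,0)}+c\|v\|_{L_2(-D,0)})$, i.e. by the state norm (for $\Delta D>0$ one first extends $v$ to the left using the past inputs).

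Next I would establish the physical constraints. Differentiating \eqref{delaycomp} along \eqref{robustsys1}--\eqref{robustsys4} no longer yields $\dot q_{\rm c}=-c q_{\rm c}$; instead one obtains the retarded delay differential equation
\[
\dot q_{\rm c}(t)=-c\,q_{\rm c}(t)+c\,q_{\rm c}(t-D)-c\,q_{\rm c}(t-D-\Delta D).
\]
The plan is to show that for small $c$ this equation preserves positivity. Writing it in integral form $q_{\rm c}(t)=q_{\rm c}(0)e^{-ct}+c\int_0^t e^{-c(t-\tau)}\bigl(q_{\rm c}(\tau-D)-q_{\rm c}(\tau-D-\Delta D)\bigr)\,d\tau$ and noting that the bracketed difference equals $\int\dot q_{\rm c}$ over an interval of length $|\Delta D|$, a small-gain bootstrap estimate keeps $q_{\rm c}(t)$ within a relative error $O(c|\Delta D|)$ of the positive nominal profile $q_{\rm c}(0)e^{-ct}$, whence $q_{\rm c}(t)>0$ for all $t>0$ once $c<\bar c$. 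With positivity in hand, Lemma \ref{lem1} (invoked as in Lemma \ref{lem:temp}) gives the model validity \eqref{valid1}, and Lemma \ref{monoinc} together with \eqref{delaycomp} reproduces $\dot s\ge0$ and $s_0<s(t)<s_{\rm r}$ exactly as in Lemma \ref{lem:interface}; these are precisely what legitimize the norm equivalences of Sections \ref{sec:stability2}--\ref{sec:stability3}.

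For the stability estimate I would repeat the Lyapunov computation of Section \ref{stability}, carrying the extra boundary term. The auxiliary variable \eqref{omedef} must be adapted so that the corrected state again satisfies a homogeneous Neumann condition at $x=0$ (otherwise the boundary term $-\omega_x(0,t)\omega_t(0,t)$ in \eqref{V2timeder} produces an uncontrolled $\omega_{xx}(0,t)$); one adds to \eqref{omedef} a term proportional to $\epsilon(t)$, at the price of introducing $\dot\epsilon(t)$ into the interior equation, which is itself expressible through $z$ and $q_{\rm c}$. The net effect is that $\dot V$ acquires perturbation terms linear in $\epsilon$, hence quadratically bounded by $|\Delta D|$ times $\|z_x\|^2$ and the remaining state; applying Young's inequality and absorbing them into the strictly negative terms of \eqref{Vdotineq} is possible provided $c$ is small enough. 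One then recovers an inequality of the form \eqref{Vder}, $\dot V\le -bV+a\dot s\,V$ with $b>0$, and concludes by the same $W=Ve^{-as(t)}$ device \eqref{Wdef}--\eqref{Wder} and the norm equivalences \eqref{piineq}, \eqref{XiPi} that $\Xi(t)$ decays exponentially in the norm \eqref{h1}.

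The main obstacle, I expect, is the interdependence of positivity and stability: the norm equivalences and the Lyapunov estimate presuppose $\dot s\ge0$ and $s<s_{\rm r}$, which follow only from $q_{\rm c}>0$, while $q_{\rm c}>0$ is no longer automatic and must be extracted from the perturbed delay equation. Establishing that positivity is \emph{preserved} — rather than merely that $q_{\rm c}$ decays — is the delicate step, since a first-hitting-zero argument fails (the sign of $\dot q_{\rm c}$ at a zero is governed by the indefinite difference $q_{\rm c}(t-D)-q_{\rm c}(t-D-\Delta D)$). The small-gain perturbation estimate that pins $q_{\rm c}$ near $q_{\rm c}(0)e^{-ct}$, together with the need to dominate the $\epsilon$-induced terms in $\dot V$, is exactly what forces the restriction $c<\bar c$ and explains why robustness is recovered only for sufficiently small control gain.
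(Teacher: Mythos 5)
Your overall architecture parallels the paper's: keep the control law and the transformations, let the mismatch surface as a perturbation of the target system, prove positivity of $q_{\rm c}$ from the delay differential equation $\dot q_{\rm c}(t)=-cq_{\rm c}(t)+c\bigl(q_{\rm c}(t-D)-q_{\rm c}(t-D-\Delta D)\bigr)$ (which you derive correctly and which is exactly the paper's \eqref{robustcontder}), and then absorb the perturbation terms in the Lyapunov estimate for small $c$. One structural difference: the paper redefines $v(x,t)=q_{\rm c}(t-x-(D+\Delta D))/k$ on the enlarged domain $(-(D+\Delta D),\max\{0,-\Delta D\})$, so the mismatch appears as a nonzero inlet condition $z(-(D+\Delta D),t)=cf(t)$ with $f(t)=\int_{-\Delta D}^0 v\,dx$, while the coupling $w_x(0,t)=-z(0,t)$ and the change of variable \eqref{omedef} remain intact; you instead keep $v$ on $(-D,0)$ and push the mismatch into $w_x(0,t)=-z(0,t)+\epsilon(t)$, which forces you to modify $\omega$ and cope with $\dot\epsilon$ in the interior. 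Both placements can be made to work, but for $\Delta D>0$ your $\epsilon(t)$ depends on $v$ at $x=\Delta D>0$, outside the domain carried by your Lyapunov functional; "extending $v$" is not enough — you must enlarge the transport domain and include the extension in the energy functionals (as the paper does), otherwise $\epsilon$ is not bounded by the state norm you are dissipating.

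The genuine gap is the positivity step, which you correctly identify as the crux but then dispatch with a claim that does not hold up. A "small-gain bootstrap" pinning $q_{\rm c}(t)$ within a relative error $O(c|\Delta D|)$ of $q_{\rm c}(0)e^{-ct}$ is precisely what cannot be obtained by the integral-form estimate you describe: the perturbation $c\int_0^t e^{-c(t-\tau)}(q_{\rm c}(\tau-D)-q_{\rm c}(\tau-D-\Delta D))\,d\tau$ can only be bounded, via a Halanay-type estimate, by a multiple of $e^{-\gamma ct}$ with $\gamma<1$, which decays \emph{strictly slower} than the nominal profile $q_{\rm c}(0)e^{-ct}$. Hence any additive error bound of that form eventually dominates the nominal term, and no uniform-in-$t$ relative bound follows; the comparison argument as sketched fails for large $t$, which is exactly where positivity is at risk. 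The paper's Lemma \ref{lem:robustpositive} (Appendix \ref{app:lemma}) resolves this with a two-stage argument: first a Halanay inequality giving $|p(t)|\le M_p e^{-\gamma t}$, then an induction over an explicitly constructed unbounded time sequence $t_n$ with slowly increasing exponents $\tilde\gamma_n>1$, proving the lower bound $p(t)\ge k e^{-\tilde\gamma_n t}>0$ on each $[t_n,t_{n+1}]$ under the smallness condition $|\Delta|\le\min\{\Delta_1,\Delta_2,\Delta_3\}$. This inductive lower-bound construction is the missing idea in your proposal; without it (or an equivalent device) the physical constraints $\dot s\ge0$, $s<s_{\rm r}$ — on which all of your norm equivalences and the absorption of $\dot s$-terms rest — are not established.
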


An important characteristic to note in Theorem \ref{thm:robust} is that the existence of $\bar c$ is ensured for any given $\del D$ as long as $D + \del D>0$. An analogous description with respect to the small delay mismatch is given in the following corollary. 
\begin{corollary} 
Under Assumptions \ref{initial}-\ref{assum2}, for any given $c>0$ there exist positive constants $\underline{\eps} >0$ and $\bar{\eps}>0$ such that $\forall \del D \in (- \underline{\eps} , \bar{\eps} )$ the closed-loop system \eqref{robustsys1}--\eqref{robustsys4}, \eqref{delaycomp} satisfies the same model validity and stability property as Theorem \ref{thm:robust}. 
\end{corollary}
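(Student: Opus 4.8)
The plan is to derive the corollary from Theorem \ref{thm:robust} by inverting the roles of the gain and the mismatch: the theorem fixes $\del D$ and produces an admissible band of gains $(0,\bar c)$, whereas the corollary fixes the gain and asks for an admissible band of mismatches. The bridge is that at zero mismatch the closed loop is stable for \emph{every} gain, so the dependence $\bar c=\bar c(\del D)$ must become unrestrictive as $\del D\to 0$. First I would make explicit the finite collection of scalar inequalities that the proof of Theorem \ref{thm:robust} imposes in order to obtain $\bar c$---namely positivity of the coefficients appearing in the mismatched analogue of \eqref{Vdotineq} (those multiplying $\|\omega_{xx}\|^2$, $\|z_x\|^2$ and $X^2$) together with the positivity condition ensuring \eqref{valid1}. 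Writing these abstractly as $F_i(c,\del D)>0$, each $F_i$ is continuous in $\del D$ on the physical range $D+\del D>0$, and the mismatch enters only through perturbation terms that vanish at $\del D=0$.

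The key step is to observe that at $\del D=0$ the mismatched target system collapses exactly onto \eqref{tar1}--\eqref{tarODE}, so the analysis of Section \ref{stability} applies verbatim and each inequality $F_i(c,0)>0$ holds strictly for \emph{all} $c>0$; equivalently, $\bar c(\del D)\to\infty$ as $\del D\to 0$. Fixing an arbitrary $c>0$, continuity of $\del D\mapsto F_i(c,\del D)$ together with $F_i(c,0)>0$ then furnishes, for each $i$, an open interval about the origin on which $F_i(c,\cdot)>0$. Taking the finite intersection of these intervals and truncating to respect $D+\del D>0$ yields constants $\underline{\eps}>0$ (with $\underline{\eps}\le D$) and $\bar{\eps}>0$ such that $c\in\big(0,\bar c(\del D)\big)$ for every $\del D\in(-\underline{\eps},\bar{\eps})$. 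Invoking Theorem \ref{thm:robust} on this interval then delivers both the model validity \eqref{valid1} and exponential stability in the norm \eqref{h1}, which is the claim.

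The hard part will be the controller positivity (model validity), not the ${\cal H}_1$ estimate. Under exact compensation the positivity was automatic from the clean identity $\dot q_{{\rm c}}=-c\,q_{{\rm c}}$ in \eqref{dercont}, giving $q_{{\rm c}}(t)=q_{{\rm c}}(0)e^{-ct}>0$; but once $\del D\neq 0$ the control law \eqref{delaycomp} compensates the identified delay $D$ while the plant \eqref{robustsys2} carries the true delay $D+\del D$, so differentiating \eqref{delaycomp} no longer closes to a pure decay and introduces a residual term driven by the mismatch. Showing that the sign of $q_{{\rm c}}$---and hence \eqref{constraint3}, \eqref{constraint4} and the norm equivalence through \eqref{Wdef}---is preserved for small $\del D$ is exactly the content of the positivity inequality among the $F_i(c,\del D)>0$ that must be verified to depend continuously on $\del D$ and to hold strictly at $\del D=0$.
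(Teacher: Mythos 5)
Your overall route is the same as the paper's (implicit) one: the corollary is obtained by reading the proof of Theorem \ref{thm:robust} with the roles of $c$ and $\del D$ interchanged, observing that every smallness condition there involves the mismatch only through factors such as $(1-e^{-c\del D})^2$, $(1-e^{-2c\del D})$ and $|\del D|$ that vanish at $\del D=0$, so that for fixed $c$ the coefficients in the mismatched analogue of \eqref{Vdotineq} remain positive on a neighborhood of $\del D=0$. For the Lyapunov part this is exactly right and your continuity argument closes it.

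The one place where your proposed mechanism is too weak as stated is the controller positivity. You correctly identify it as the hard part, but "a finite collection of scalar inequalities $F_i(c,\del D)>0$, continuous in $\del D$ and strict at $\del D=0$" does not capture it: positivity of $q_{{\rm c}}(t)$ for \emph{all} $t\ge 0$ is an infinite family of constraints, and at $\del D=0$ the nominal solution $q_{{\rm c}}(t)=q_{{\rm c}}(0)e^{-ct}$ has a margin that decays to zero as $t\to\infty$, so strict positivity at $\del D=0$ plus continuity in $\del D$ gives nothing uniform in $t$. The paper handles this with Lemma \ref{lem:robustpositive} and Appendix \ref{app:lemma}: after rescaling time by $c$, the perturbed delay ODE \eqref{robustcontder} is analyzed via a Halanay inequality and an induction over a sequence of time intervals that produces exponential lower bounds $p(t)\ge k e^{-\tilde\gamma_n t}$, yielding a sufficient condition of the form $|c\,\del D|\le\min\{\Delta_1,\Delta_2,\Delta_3\}$. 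Since for fixed $c$ this is met for $|\del D|$ small, the conclusion of your corollary does hold, but you need to invoke (or reconstruct) this uniform-in-time argument rather than the continuity of finitely many coefficients; with that substitution your proof is complete and coincides with the paper's.
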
 

The proof of Theorem \ref{thm:robust} is established through the remaining of this section. 

\subsection{Reference error system} 
Introduce the same definition of the reference errors as in \eqref{state} and \eqref{vdef}, namely, $u(x,t) = T(x,t) - T_m$, $X(t) = s(t) - s_r$. For the delayed input state, we modify to the following definition
\begin{align} \label{delaystate-rob}
v(x,t) = q_{c}(t - x - (D + \Delta D))/k.  
\end{align} 
Then, the system \eqref{robustsys1}--\eqref{robustsys4} is rewritten as a reference error $(u, v, X)$-system as  
\begin{align}\label{robusterrsys1}
v_t(x,t) =& -v_x(x,t),\quad - (D + \Delta D)<x< \max \{0, - \del D\}\\
\label{robusterrsys2}v(- (D + \Delta D),t) =& q_{{\rm c}}(t)/k, \\
\label{robusterrsys3} u_x(0,t) =& -v( 0,t),\\
\label{robusterrsys4}u_{t}(x,t) =& \alpha u_{xx}(x,t),  \quad 0<x<s(t)\\
\label{robusterrsys5}u(s(t),t) =&0,\\
\label{robusterrsys6}\dot{X}(t) =& -\beta u_{x}(s(t),t). 
\end{align}

\subsection{Target system} 
We apply the same transformations as in \eqref{eq:DB1}--\eqref{eq:DB2}. Since the boundary condition at the controller's position is replaced by \eqref{robusterrsys2}, evaluating \eqref{eq:DB2} at $x=- (D + \Delta D)$ yields 
\begin{align}\label{z-D}
z(- (D + \Delta D) ,t) =& \frac{q_{{\rm c}}(t)}{k} + c \int_{- (D + \Delta D)}^{0} v(y,t) dy+ \frac{c}{\alpha} \int_{0}^{s(t)}  u(y,t) dy + \frac{c}{\beta} X(t). 
\end{align}
Rewriting the control law \eqref{delaycomp} using the delayed input state \eqref{delaystate-rob}, we obtain 
\begin{align} \label{qck-rob}
\frac{q_{c}(t)}{k} = - c \left( \int_{- (D + \Delta D)}^{- \del D} v(y,t) dy+ \frac{1}{\alpha} \int_{0}^{s(t)}  u(y,t) dy + \frac{1}{\beta} X(t) \right) . 
\end{align} 
By \eqref{z-D} and \eqref{qck-rob}, it holds  
\begin{align} 
z(- (D + \Delta D),t) =& c \int_{- \Delta D}^{ 0} v(x,t) dx . 
\end{align} 
Let $f(t)$ be defined by 
\ba \label{ftdef} 
f(t):=  \int_{- \Delta D}^{ 0} v(x,t) dx. 
\end{align} 
To describe the closed-form of the target system, the formulation of \eqref{ftdef} needs to be rewritten with respect to the variables $(z,w,X)$. Applying the inverse transformation \eqref{inv2} to \eqref{ftdef} and calculating the integrations, we deduce (see Appendix \ref{app:f} for the derivation)
\begin{align} \label{ftfin} 
f(t) = -  \int_{- \Delta D}^{ 0} e^{- c(x + \Delta D)} z(x,t) dx - ( 1 - e^{ - c \Delta D} ) \left( \frac{\beta}{\alpha} \int_{0}^{s(t)} \zeta(y)  w(y,t) dy + \zeta(s(t)) X(t) \right)  . 
\end{align} 
 Then, we obtain the target $(z,w,X)$-system as 
\begin{align}\label{robusttar1}
z_t(x,t) =& -z_x(x,t),\quad - (D + \Delta D)<x< \max \{0, - \del D\}\\
\label{robusttar2}z(- (D + \Delta D),t) =& cf(t), \\
\label{robusttar3}w_x(0,t) =& -z(0,t),\\
\label{robusttar4}w_{t}(x,t) =& \alpha w_{xx}(x,t) + \frac{c}{\beta} \dot{s}(t) X(t),  \quad 0<x<s(t)\\
w(s(t),t) =&0,\\
\label{robusttarODE}\dot{X}(t) =& -cX(t) -\beta w_{x}(s(t),t). 
\end{align}

\subsection{Physical constraints} 
The conditions for the model validity \eqref{valid1} need to be satisfied by proving the positivity of the control law as explained in Section \ref{sec:constraints}. Taking the time derivative of the controller \eqref{delaycomp} along the solution of \eqref{robustsys1}--\eqref{robustsys4}, we obtain 
\begin{align} \label{robustcontder}
\dot{q}_{c}(t) = - c q_{c}(t) + c \left( q_{c}(t - D) - q_{c}(t - (D + \Delta D)) \right) . 
\end{align} 
The solution to \eqref{robustcontder} is hard to solve explicitly, however, it is possible to investigate the positivity of $q_{c}(t)$ {\em a priori} since the differential equation \eqref{robustcontder} has the closed form in $q_{c}$. To ensure it, the following lemma is deuced. 
\begin{lemma} \label{lem:robustpositive} 
For a given $\del D \in R$, there exists $c^*>0$ such that $\forall c \in (0, c^*)$ the solution to the delay differential equation \eqref{robustcontder} satisfies the positivity, i.e., $q_{c}(t) > 0$ for all $t \geq 0$. 
 \end{lemma}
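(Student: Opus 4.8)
The plan is to work directly with the scalar delay differential equation \eqref{robustcontder}, treating the prescribed past input on $[-(D+\del D),0]$ together with the value $q_{c}(0)$ as its data. First I would record that $q_{c}(0)>0$: evaluating the control law \eqref{delaycomp} at $t=0$ and invoking the setpoint restriction \eqref{setpoint} of Assumption \ref{assum2} exactly as in the proof of Lemma \ref{Prop1} shows that the bracketed quantity is negative, so that $q_{c}(0)=-c(\cdots)>0$. The obstacle to concluding positivity for all $t\geq0$ is the \emph{negative} delayed coefficient in \eqref{robustcontder}: the term $-c\,q_{c}(t-(D+\del D))$ prevents \eqref{robustcontder} from being an order-preserving (positive) system, so positivity cannot be read off from a one-line comparison and must be obtained quantitatively, exploiting the smallness of $c$.

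The central device is the weighted variable $r(t):=e^{ct}q_{c}(t)$, for which a direct computation turns \eqref{robustcontder} into the purely delayed difference equation
\begin{align}\label{rplan}
\dot r(t) = c\,e^{cD}\Big(r(t-D)-e^{c\del D}\,r(t-(D+\del D))\Big),
\end{align}
with no instantaneous damping. Since $q_{c}(t)>0$ is equivalent to $r(t)>0$, and since the right-hand side of \eqref{rplan} carries an explicit factor $c$, the variable $r$ varies slowly; this is what makes the smallness of $c$ usable. I would also note that \eqref{rplan} is consistent with exact compensation: when $\del D=0$ it reduces to $\dot r\equiv0$, recovering $q_{c}(t)=q_{c}(0)e^{-ct}$ as in Lemma \ref{Prop1}.

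The heart of the proof is a bootstrap on the logarithmic derivative of $r$. Arguing by contradiction, suppose $t^{*}:=\inf\{t>0:r(t)=0\}$ is finite, so that $r>0$ on $[0,t^{*})$ and $\ell(t):=\ln r(t)$ is well defined there with $\dot\ell=\dot r/r$. Dividing \eqref{rplan} by $r(t)$ gives
\begin{align}\label{ellplan}
\dot\ell(t) = c\,e^{cD}\left(\frac{r(t-D)}{r(t)}-e^{c\del D}\,\frac{r(t-(D+\del D))}{r(t)}\right),
\end{align}
and the two ratios are controlled by the slow variation of $\ell$ itself: if $|\dot\ell|\leq Kc$ on $[0,t)$ then $r(t-h)/r(t)=\exp\!\big(-\!\int_{t-h}^{t}\dot\ell\big)\leq e^{Kc(D+|\del D|)}$ for $h\in\{D,\,D+\del D\}$. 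Substituting into \eqref{ellplan} yields $|\dot\ell(t)|\leq c\,e^{cD}(1+e^{c\del D})e^{Kc(D+|\del D|)}$, and choosing, say, $K=3$ makes the right-hand side $\leq Kc$ for all sufficiently small $c$, since the prefactor tends to $2<3$ as $c\to0$. This closes the bootstrap and gives $|\dot\ell|\leq 3c$ on $[0,t^{*})$, whence $r(t)\geq q_{c}(0)\,e^{-3ct}$ and, by continuity, $r(t^{*})\geq q_{c}(0)e^{-3ct^{*}}>0$, contradicting $r(t^{*})=0$. Hence $r$, and therefore $q_{c}$, is strictly positive for all $t\geq0$, and $c^{*}$ is taken as the threshold making the bootstrap inequality hold.

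The one genuinely delicate point — and the step I expect to require the most care — is the \emph{initial window} $t\in[0,D+|\del D|)$, where the delayed arguments $t-D$ and $t-(D+\del D)$ fall into the prescribed history. There the ratios in \eqref{ellplan} involve the past input rather than $r$ on $[0,t)$, so the clean self-referential bound does not apply; instead I would first run a short Gronwall estimate on \eqref{rplan} over this bounded interval to show $r(t)\geq\tfrac12 q_{c}(0)$ (the $O(c)$ variation cannot exhaust the initial value $q_{c}(0)$ for small $c$), and then bound the negative term in \eqref{ellplan} by $c\,e^{c(D+\del D)}R_{0}/(\tfrac12 q_{c}(0))$, where $R_{0}$ is the supremum of $|r|$ over the history. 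This contributes an $O(c)$ constant $K_{0}$ to the bound on $|\dot\ell|$, so replacing $K$ by $\max\{K_{0},3\}$ in the argument above closes the bootstrap uniformly on $[0,t^{*})$ and completes the proof.
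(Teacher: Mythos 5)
Your route is genuinely different from the paper's: the paper rescales time by $c$, rewrites the difference of delayed terms as $\Delta\,\dot p(a(t))$ via the mean value theorem, gets exponential decay from a Halanay inequality, and then builds a positive lower bound by variation of constants plus an induction over an unbounded sequence of times $t_n$ with slowly degrading decay rates $\tilde\gamma_n$. Your weighted variable $r(t)=e^{ct}q_c(t)$ and the self-referential bootstrap on $\dot\ell=\dot r/r$ is a more compact device, and for $t\ge\max\{D,D+\Delta D\}$ (where both delayed arguments lie in $[0,t]$ and the ratios $r(t-h)/r(t)$ really are controlled by $\ell$ itself) the argument closes as you describe. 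The problem is the initial window, exactly where you flag the difficulty, and there the proposal as written fails.

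The flaw is the sentence ``the $O(c)$ variation cannot exhaust the initial value $q_c(0)$ for small $c$.'' Evaluating \eqref{delaycomp} at $t=0$ gives $q_c(0)=c\sigma$ with $\sigma=\tfrac{k}{\beta}(s_r-s_0)-\int_{-D}^0 q_c(\theta)\,d\theta-\tfrac{k}{\alpha}\int_0^{s_0}(T_0-T_{\rm m})\,dx>0$ a fixed number independent of $c$; so the initial value is itself $O(c)$. Meanwhile the drift of $r$ on $[0,\max\{D,D+\Delta D\}]$ is $c\,e^{cD}\bigl(r(t-D)-e^{c\Delta D}r(t-D-\Delta D)\bigr)$, whose integral is $c$ times a quantity of the order of the prescribed past input, which is $O(1)$ and a priori unrelated to $\sigma$. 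Both the initial value and the potential drop are proportional to $c$, the factors of $c$ cancel, and shrinking $c$ buys nothing: your Gronwall step does not deliver $r(t)\ge\tfrac12 q_c(0)$ on the initial window, and indeed a past input that differs appreciably between the two windows $(t-D-\Delta D,t-D)$ can drive $q_c$ negative there uniformly in $c$ when $\sigma$ is small. What is missing is the cancellation between the two delayed terms: the paper's proof writes $q_c(t-D)-q_c(t-(D+\Delta D))=\Delta D\,\dot q_c(a(t))$ by the mean value theorem, so the perturbation on the initial window is proportional to $\Delta D$ times the \emph{derivative} of the history (the constant $M_p'$ in its Appendix B, which enters the threshold $\Delta_1$), not to its supremum $R_0$ as in your bound. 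A secondary bookkeeping error stems from the same oversight: the resulting bound on $|\dot\ell|$ over the initial window is $c\,C R_0/(\tfrac12 q_c(0))=2CR_0/\sigma$, an $O(1)$ quantity rather than an ``$O(c)$ constant $K_0$,'' so the decay estimate $r(t)\ge q_c(0)e^{-3ct}$ cannot hold there and the bootstrap constant must be reorganized (this part is repairable; the lower bound on $r$ over the initial window is not, without importing the mean-value-theorem device or an assumption on the history's variation).
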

Owing to Lemma \ref{lem:robustpositive}, with sufficiently small control gain $c>0$, the properties $\dot s(t)>0$ and $s_0<s(t)<s_r$ are satisfied $\forall t>0$ as explained in Lemma \ref{lem:interface}, which are utilized for the stability analysis. The proof of Lemma \ref{lem:robustpositive} is given in Appendix \ref{app:lemma}.

\subsection{Stability analysis} 

Introduce the same change of variable as \eqref{omedef}, i.e., $\omega(x,t) = w(x,t) + \left(x - s(t) \right) z(0,t)$. Then, the resulting target system becomes 
\begin{align}\label{robustchsys1}
z(- (D + \Delta D),t) =& c f(t), \\
\label{robustchsys2}z_t(x,t) =& -z_x(x,t),\quad - (D + \Delta D)<x< \max \{0, - \del D\}\\
\label{robustchsys3}\omega_x(0,t) =& 0 ,\\
\omega_{t}(x,t) =& \alpha \omega_{xx}(x,t) - \left(x - s(t) \right) z_{x}(0,t) + \dot{s}(t) \left( \frac{c}{\beta} X(t) - z(0,t)\right) ,  \quad 0<x<s(t)\label{robustchsys4}\\
\label{robustchsys5}\omega(s(t),t) =&0,\\
\label{robustchsys6}\dot{X}(t) =& -cX(t) -\beta (\omega_{x}(s(t),t)+ z(0,t)). 
\end{align}
Inequalities of $f(t)^2$ and $f'(t)^2$ are derived in Appendix \ref{app:bound}, and they are utilized in stability analysis. The form of the inequalities are slightly changed depending on $\del D>0$ or $\del D<0$, due to the domain $- (D + \Delta D)<x< \max \{0, - \del D\}$ for $z$-subsystem. Thus, the $(z, \omega, X)$-system's stability needs to be analyzed by separating the cases of $\del D>0$ (underestimated delay mismatch) and $\del D<0$ (over-estimated delay mismatch), as presented in \cite{krstic2008lyapunov} to prove delay-robustness for linear ODE systems. 

\subsubsection{Underestimated delay mismatch: $\del D>0$} 
Consider 
\begin{align}\label{robV1def}
V_1 = \int_{- (D+\Delta D)}^{0} e^{- mx }z(x,t)^2 dx. 
\end{align}
Taking the time derivative of \eqref{robV1def}, we have 
\begin{align} \label{robV1dot}
\dot{V}_1  = - z(0,t)^2 + e^{ m(D + \del D) } c^2 f(t)^2 - m V_1 . 
\end{align} 
Applying the bound given in Appendix \ref{app:fbound} with the help of $\del D>0$ and $||z||_{{ L}_2 (- \del D,0)}^2 \leq ||z||_{{ L}_2 (-(D+ \del D),0)}^2$ to \eqref{robV1dot} yields 
 \begin{align} \label{underestimateV1dot} 
\dot{V}_1  \leq  - z(0,t)^2 + e^{ m(D + \del D) } c^2 \left( 2 \bar M_1  ||z||_{{ L}_2 (-(D+ \del D),0)}^2 + \bar M_2 || \omega||^2 +  \bar M_3 z(0,t)^2  + \bar M_4 X(t)^2 \right) - m V_1 , 
\end{align} 
where 
\begin{align} \label{def:M} 
\bar M_1 = \frac{{\textrm{sign}} ( \del D)}{2c} \left( 1 - e^{-2c \del D} \right), \quad \bar M_2 = \frac{8 s_r}{\alpha^2}  ( 1 - e^{ - c \Delta D} )^2  , \quad \bar M_3 = \frac{ 8}{\alpha c}  ( 1 - e^{ - c \Delta D} )^2 , \quad \bar M_4 =  \frac{4}{\beta^2} ( 1 - e^{ - c \Delta D} )^2. 
\end{align} 
Next, we consider
\begin{align}\label{robV2def}
V_2 = \int_{- (D+\Delta D)}^{0} e^{- m x }z_{x}(x,t)^2 dx. 
\end{align}
\eqref{robV2def} satisfies $||z_{x}||_{{ L}_2 (-(D+\Delta D),0)}^2 \leq V_2 \leq e^{m(D+ \del D)} ||z_{x}||_{{ L}_2 (-(D+ \del D),0)}^2$. Taking the time derivative of \eqref{robV2def} together with \eqref{robustchsys1}-\eqref{robustchsys2}, we have 
\begin{align}\label{robV2der}
\dot{V}_2 = & -  z_{x}(0,t)^2  +  e^{ m(D + \del D) } c^2 f'(t)^2 - m V_2. 
\end{align}
Applying the bound given in Appendix \ref{app:ftbound} to \eqref{robV1dot} yields 
\begin{align} 
\dot{V}_2 \leq &-  z_{x}(0,t)^2  - m V_2  \notag\\
& +  e^{ m(D + \del D) } c^2 \left(  4 | \del D| ||z_{x}||_{{ L}_2 (-(D+ \del D),0)}^2 + 2 c^2 \bar M_1 ||z||_{{ L}_2 (-(D+ \del D),0)}^2  +  c^2 \left(  \bar M_2 || \omega||^2 +  \bar M_3 z(0,t)^2  + \bar M_4 X(t)^2\right)  \right). \label{underestimateV2dot} 
\end{align} 
Hence, we have 
\begin{align} 
\dot{V}_1 + r \dot{V}_2 \leq&  -  \left( 1 - c^2e^{ m(D + \del D)} \bar M_3 (1 + r c^2)  \right) z(0,t)^2 + e^{ m(D + \del D) } c^2 \bar M_2(1+rc^2)  || \omega||^2 \notag\\
& - ( m - 2e^{ m(D + \del D) } c^2 \bar M_1 (1+rc^2)) V_1 + e^{ m(D + \del D) } c^2 \bar M_4(1+rc^2)   X(t)^2   \notag\\
&- r  z_{x}(0,t)^2  - r  (m -  4 e^{ m(D + \del D) } c^2 \del D)  V_2  \label{robunderV1dot} 
\end{align} 
Since $(\omega,X)$-subsystem \eqref{robustchsys3}--\eqref{robustchsys6} is the same formulation as in the exact prediction case, using the same norm $V_3 =\frac{1}{2} ||\omega ||_{{\cal H}_1 (0,s(t))}^2 =  \frac{1}{2}\int_0^{s(t)} ( \frac{\omega(x,t)^2}{s_r^2}+\omega_{x}(x,t)^2) dx$ and $ V_4=\frac{1}{2}X(t)^2$, 
the time derivative of them yields the same norm estimate as in the exact prediction case, which are 
\begin{align}
\dot{V}_3 \leq& - \frac{\alpha}{2} ||\omega_{xx}||_{L_2 (0,s(t))}^2 - \frac{\alpha}{2 s_r^2} ||\omega_{x}||_{L_2 (0,s(t))}^2   + \frac{16 s_{{\rm r}}^3}{3 \alpha} z_{x}(0,t)^2  \notag\\
& +  \dot{s}(t) \left( 2 \frac{c^2}{\beta^2} X(t)^2 + 2 z(0,t)^2  + \frac{1}{2s_r^3}  ||\omega||_{L_2 (0,s(t))}^2\right) \label{d-V2der}, \\
\dot{V}_4\leq &-\frac{c}{2}X(t)^2+\frac{4\beta^2 s_{{\rm r}}}{c} ||\omega_{xx}||_{{ L}_2 (0,s(t))}^2+ \frac{\beta^2}{c} z(0,t)^2. \label{robunderV4dot}
\end{align}
Let $V$ be the Lyapunov function defined by 
\begin{align}\label{robunderV} 
V =V_1 + r V_2 + qV_3 + pV_4,  
\end{align}
where $r>0$, $q>0$, $p>0$ are positive parameters to be determined. Applying \eqref{robunderV1dot}--\eqref{robunderV4dot} to the time derivative of \eqref{robunderV} with choosing 
\begin{align} \label{robust:parameters} 
 q = \frac{8 s_r}{\alpha}, \quad p  =  \frac{c }{2 \beta^2 }, \quad r = \frac{32q s_{{\rm r}}^3}{3 \alpha} , 
\end{align} 
we get  
\begin{align} 
\dot{V} \leq  &  -  \left( \frac{1}{2} -  \frac{ 8}{\alpha } e^{ m(D + \del D)} c ( 1 - e^{ - c \Delta D} )^2  (1 + r c^2)  \right) z(0,t)^2 -  \frac{r}{2} z_{x}(0,t)^2 \notag\\
& - ( m - e^{ m(D + \del D) }  c\left( 1 - e^{-2c \del D} \right) (1+rc^2)) V_1  \notag\\
& - r \left(m -  4 e^{ m(D + \del D) } c^2 |\del D|  \right) V_2  - \frac{c^2}{4 \beta^2 } \left\{ 1  - 16 e^{ m(D + \del D) } ( 1 - e^{ - c \Delta D} )^2 (1+rc^2)  \right\}X(t)^2  \notag\\
& -  2 s_r || \omega_{xx}||_{L_2 (0,s(t))}^2  - 2 s_r || \omega_{x}||_{L_2 (0,s(t))}^2 - \left( \frac{1}{2 s_r} -  \frac{8 s_r}{\alpha^2} e^{ m(D + \del D) } c^2  ( 1 - e^{ - c \Delta D} )^2(1+rc^2)\right)  || \omega||^2   \notag\\
&+q\dot{s}(t) \left\{  \frac{2 c^2}{\beta^2} X(t)^2  + 8D|| z_{x}||_{{ L}_2 (-D,0)}^2+ \frac{s_{{\rm r}}}{2} || \omega||_{{ L}_2 (0,s(t))}^2  \right\} , \label{robust_under_Vdot} 
\end{align} 
where we substitute the definition of $\bar M_{i}$ for $i=1,2,3,4$ given in \eqref{def:M}. Thus, for sufficiently small $c>0$, there exist positive constants $b>0$ and $a>0$ such that $\dot{V} \leq - bV + a\dot{s}(t) V$ holds. Using the same technique as deriving from \eqref{Vder} to \eqref{finallyap} in exact compensation case, we conclude Theorem \ref{thm:robust} for underestimated delay mismatch. 

\subsubsection{Over-estimated delay mismatch: $\del D<0$} 
We consider the same definitions of $V_1$, $V_2$, $V_3$, $V_4$ as those in the case $\del D>0$. However, in the case of $\del D<0$, the bounds given in Appendix \ref{app:bound} yields $f(t)^2 \leq  2 \bar M_1   ||z||_{{ L}_2 (0, - \Delta D)}^2 + \bar  M_2 || \omega||^2 + \bar M_3 z(0,t)^2  + \bar  M_4 X(t)^2$ and $ f'(t)^2 \leq  4 | \del D| ||z_{x}||_{{ L}_2 (0, - \del D)}^2 + 2 c^2 \bar M_1 ||z||_{{ L}_2 (0, \del D)}^2  +  c^2 \left(  \bar M_2 || \omega||^2 + \bar  M_3 z(0,t)^2  + \bar  M_4 X(t)^2\right) $. Hence, the bounds of \eqref{underestimateV1dot} and \eqref{underestimateV2dot} are replaced by 
\begin{align}
\dot{V}_1  \leq &  - z(0,t)^2 + e^{ m(D + \del D) } c^2 \left( 2 \bar  M_1  ||z||_{{ L}_2 (0, - \Delta D)}^2 + \bar M_2 || \omega||^2 + \bar  M_3 z(0,t)^2  +\bar M_4 X(t)^2 \right) - m V_1 , \\
\dot{V}_2 \leq &-  z_{x}(0,t)^2  - m V_2  \notag\\
& +  e^{ m(D + \del D) } c^2 \left(  4 | \del D| ||z_{x}||_{{ L}_2 (0, - \del D)}^2 + 2 c^2 \bar M_1 ||z||_{{ L}_2 (0, \del D)}^2  +  c^2 \left( \bar  M_2 || \omega||^2 + \bar M_3 z(0,t)^2  + \bar M_4 X(t)^2\right)  \right). 
\end{align} 
We additionally consider
\begin{align} \label{V5} 
V_5 = \int_{0}^{- \Delta D} e^{- m x} z(x,t)^2 dx , \\
V_6 =  \int_{0}^{- \Delta D} e^{- m x} z_{x}(x,t)^2 dx. \label{V6} 
\end{align} 
Then, we have $ e^{m \del D} ||z||_{{ L}_2 (0, - \Delta D)}^2 \leq V_5 \leq   ||z||_{{ L}_2 (0, - \Delta D)}^2$ and $ e^{m \del D} ||z_{x}||_{{ L}_2 (0, - \Delta D)}^2 \leq V_6 \leq   ||z_{x}||_{{ L}_2 (0, - \Delta D)}^2$. The time derivatives of \eqref{V5} and \eqref{V6} are given by 
\begin{align} 
\dot{V}_5 =& - e^{ m \Delta D} z( -\del D,t)^2 +   z( 0,t)^2 - m V_5, \\
\dot{V}_6 =& - e^{ m \Delta D} z_{x}( -\del D,t)^2 +    z_{x}( 0,t)^2 - m V_6. 
\end{align} 
Then, by redefining the Lyapunov function $V$ as 
\begin{align} \label{robust_V_2} 
V = V_1 + r V_2 + qV_3 + pV_4 + \frac{1}{4} V_5 +  \frac{r}{4}  V_6, 
\end{align}
with the same choices of the parameters as \eqref{robust:parameters}, the time derivative of \eqref{robust_V_2} satisfies the following inequality 
\begin{align} 
\dot{V} \leq  &  -  \left( \frac{1}{4} -  \frac{ 8}{\alpha } e^{ m(D + \del D)} c ( 1 - e^{ - c \Delta D} )^2  (1 + r c^2)  \right) z(0,t)^2 -  \frac{r}{4} z_{x}(0,t)^2 \notag\\
& - m V_1 - r m V_2  - \frac{c^2}{4 \beta^2 } \left\{ 1  - 16 e^{ m(D + \del D) } ( 1 - e^{ - c \Delta D} )^2 (1+rc^2)  \right\}X(t)^2  \notag\\
& - \left( \frac{m}{4} -  e^{ mD } c\left( e^{-2c \del D} - 1\right) (1+rc^2) \right) V_5 - \left( \frac{m}{4} -  4 r e^{ mD } c^2 |\del D| \right) V_6 \notag\\ 
& -  2 s_r || \omega_{xx}||_{L_2 (0,s(t))}^2  - 2 s_r || \omega_{x}||_{L_2 (0,s(t))}^2 - \left( \frac{1}{2 s_r} -  \frac{8 s_r}{\alpha^2} e^{ m(D + \del D) } c^2  ( 1 - e^{ - c \Delta D} )^2(1+rc^2)\right)  || \omega||^2   \notag\\
&+q\dot{s}(t) \left\{  \frac{2 c^2}{\beta^2} X(t)^2  + 8D|| z_{x}||_{{ L}_2 (-D,0)}^2+ \frac{s_{{\rm r}}}{2} || \omega||_{{ L}_2 (0,s(t))}^2  \right\} , 
\end{align} 
from which we conclude Theorem \ref{thm:robust} for over-estimated delay mismatch. 

\section{Numerical Simulation}\label{simulation}
In this section, we provide the simulation results of the proposed delay compensated controller under the accurate value on the delay and the delay mismatch. 
\subsection{Exact Compensation} 
The performance of the proposed delay compensated controller is investigated by comparing to the performance of the nominal controller \eqref{uncomp}. As in \cite{maidi2014} and \cite{Shumon16}, the simulation is performed considering a strip of zinc whose physical properties are given in Table \ref{zinc} using the boundary immobilization method with a finite difference discretization studied in \cite{kutluay97}. The time delay, the past heat input, and the initial values are set as $D =$ 2 [min], $q_{{\rm c}}(t) =$ 500 [W/m] for $\forall t\in[-D,0)$, $s_0$ = 0.1 [m], and $T_0(x)= \bar T(1-x/s_0) + T_{{\rm m}}$ with $\bar T =$ 50 [K]. The setpoint and the controller gain are chosen as $s_{{\rm r}} =$ 0.15 m and $c=$0.01/s, which satisfies the setpoint restriction \eqref{setpoint}. 

\begin{table}[t]
\vspace{2.5mm}
\caption{Physical properties of zinc}
\begin{center}
    \begin{tabular}{| l | l | l | }
    \hline
    $\textbf{Description}$ & $\textbf{Symbol}$ & $\textbf{Value}$ \\ \hline
    Density & $\rho$ & 6570 ${\rm kg}\cdot {\rm m}^{-3}$\\ 
    Latent heat of fusion & $\Delta H^*$ & 111,961 ${\rm J}\cdot {\rm kg}^{-1}$ \\ 
    Heat Capacity & $C_p$ & 389.5687 ${\rm J} \cdot {\rm kg}^{-1}\cdot {\rm K}^{-1}$  \\  
    Thermal conductivity & $k$ & 116 ${\rm w}\cdot {\rm m}^{-1}$  \\ \hline
    \end{tabular}\label{zinc}
\end{center}
\end{table}

Fig. \ref{fig:delay} shows the simulation results of the closed-loop system of the plant \eqref{eq:stefanPDE}--\eqref{eq:stefanODE} with the proposed delay compensated control \eqref{delaycomp} (red) and the uncompensated control law \eqref{uncomp} (blue). The closed-loop responses of the moving interface $s(t)$, the boundary heat control $q_{{\rm c}}(t)$, and the boundary temperature $T(0,t)$ are depicted in Fig. \ref{fig:delay} (a)--(c), respectively. As stated in their captions, the proposed delay compensated controller ensures all the conditions proved in Lemma \ref{Prop1}--\ref{lem:interface} with the convergence of the interface position to the setpoint, while the uncompensated control does not provide such a behavior. Hence, the numerical result is consistent with the theoretical result, and the proposed controller achieves better performances than the uncompensated controller under the actuator delay.

\begin{figure}[t]
\centering
\subfloat[Delay compensated control achieves the monotonic convergence of $s(t)$ to the setpoint $s_{{\rm r}}$ without overshooting, i.e.  $\dot{s}(t)>0$, $s_0<s(t)<s_{{\rm r}}$.]{\includegraphics[width=3.0in]{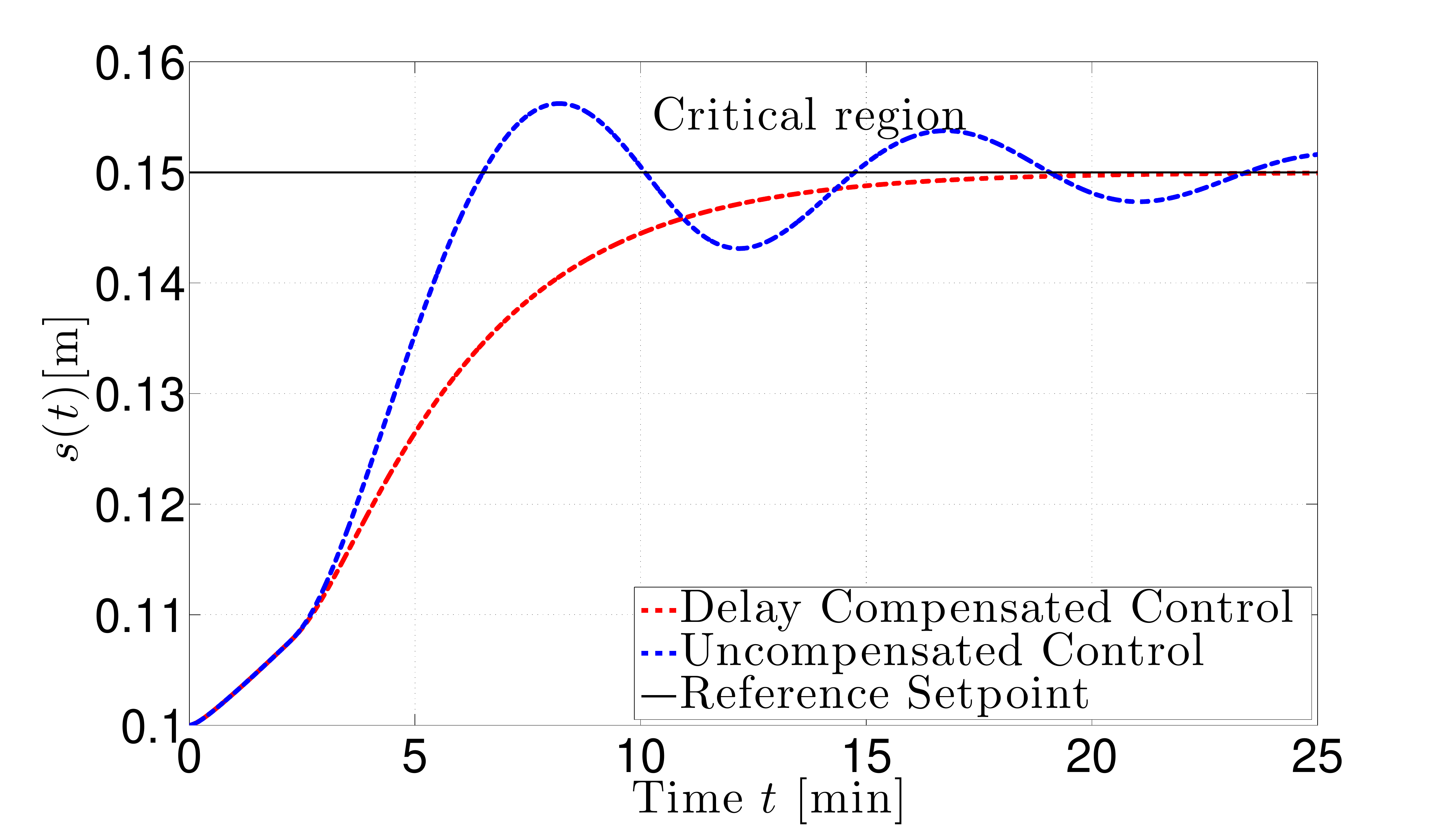}
}
\subfloat[Delay compensated control keeps injecting positive heat, i.e. $q_{c}(t)>0$.]{\hspace{-0.30in}\includegraphics[width=3.2in]{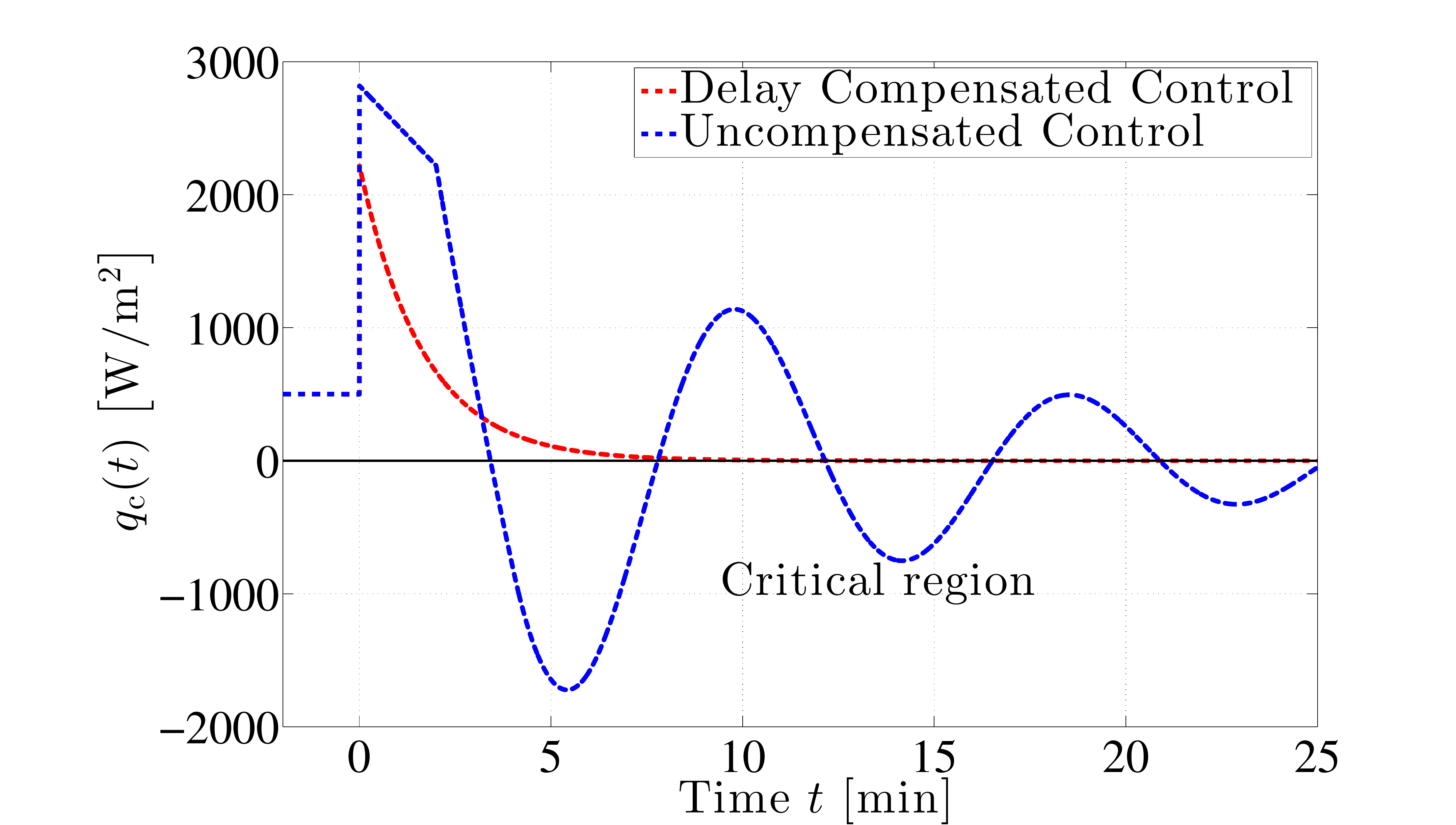}}\\
\subfloat[$T(0,t)$ converges to the melting temperature $T_{{\rm m}}$ with maintaining  $T(0,t)>T_{{\rm m}}$.]{\includegraphics[width=3.0in]{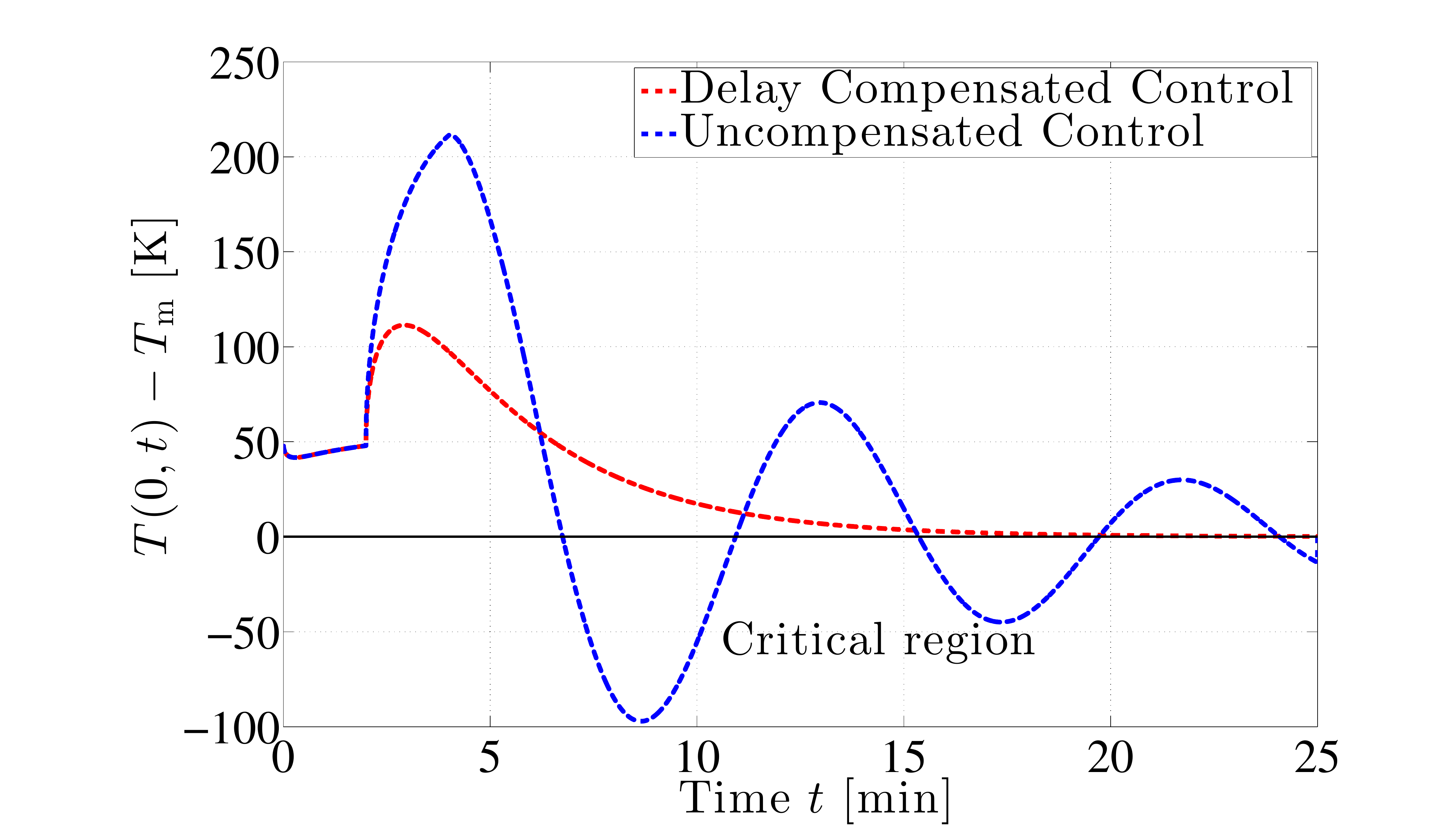}\label{fig:control_1}}
\caption{The closed-loop response of \eqref{eq:stefanPDE}-\eqref{eq:stefanODE} with the delay compensated control law \eqref{delaycomp} (red) and the uncompensated control law \eqref{uncomp} (blue).}
\label{fig:delay}
\end{figure}

\subsection{Robustness to Delay Mismatch} 
To evaluate the delay robustness, the performance of the proposed controller is investigated under the delay mismatch. First, the simulation is conducted with the underestimated delay mismatch where the time delay from the actuator to the plant is 60 [sec] while the compensating time delay in the controller is $D = $ 30 [sec], i.e., the delay mismatch is $\del D = $ 30 [sec]. The closed-loop responses are depicted in Fig. \ref{fig:robust1} with the choices of the control gain $c$ = 0.01 [1/sec] (red) and $c = $ 0.1 [1/sec] (blue). Fig. \ref{fig:robust1} (a) illustrates the convergence of the interface position to the setpoint, however, the monotonicity of the interface dynamics is violated with larger gain (red). From Fig. \ref{fig:robust1} (b) and (c) we can observe that the positivity of the control input and the temperature condition for the liquid phase are satisfied only with the lower gain (blue) for all time, while the simulation with the larger gain (red) violates these conditions too. Hence, with the underestimated delay mismatch, the robustness is well illustrated for sufficiently small gain $c>0$, which is consistent with Theorem \ref{thm:robust}. 

Next, we have studied the simulation with the over-estimated delay mismatch with the same value of the time delay from the actuator to the plant 60 [sec] but the compensating time delay in the controller is $D = $ 90 [sec], i.e., the delay mismatch is $\del D = $ -30 [sec]. The closed-loop responses are depicted in Fig. \ref{fig:robust2} with the same choices of the control gain as in simulation of underestimated delay mismatch. While the magnitude of the delay mismatch is same as the one conducted in the underestimated delay mismatch, we can observe from Fig. \ref{fig:robust2} (b) and (c) that the positivity of the control input and the temperature condition for the model validity are satisfied for all time with both smaller control gain (red) and larger control gain (blue). Although Theorem \ref{thm:robust} guarantees these properties only for sufficiently small control gain $c>0$, the numerical results illustrate that the restriction on the control gain to satisfy these properties is not equivalent between the underestimated and over-estimated delay mismatch. 

Indeed, as far as we have investigated the numerical results with the over-estimated delay mismatch using other values of the control gain $c$ and the delay perturbation $\del D$, the positivity of the control input is satisfied for every cases and the convergence of the interface position to the setpoint is depicted without overshooting. These observations from the numerical simulation leads us to conjecture that the delay-compensated controller might exhibit greater sensitivity to delay mismatch when it is underestimated rather than over-estimated in terms of the model validation. Hence, once the user is faced with some range of the uncertainty in the actuator delay, it is better to choose small control gain $c>0$, and additionally, it might be better to choose larger value of the compensating delay in the controller to be conservative.   


\begin{figure}[t]
\centering
\subfloat[Monotonicity of the interface dynamics is satisfied with smaller gain, but is violated with larger gain.]{\includegraphics[width=3.0in]{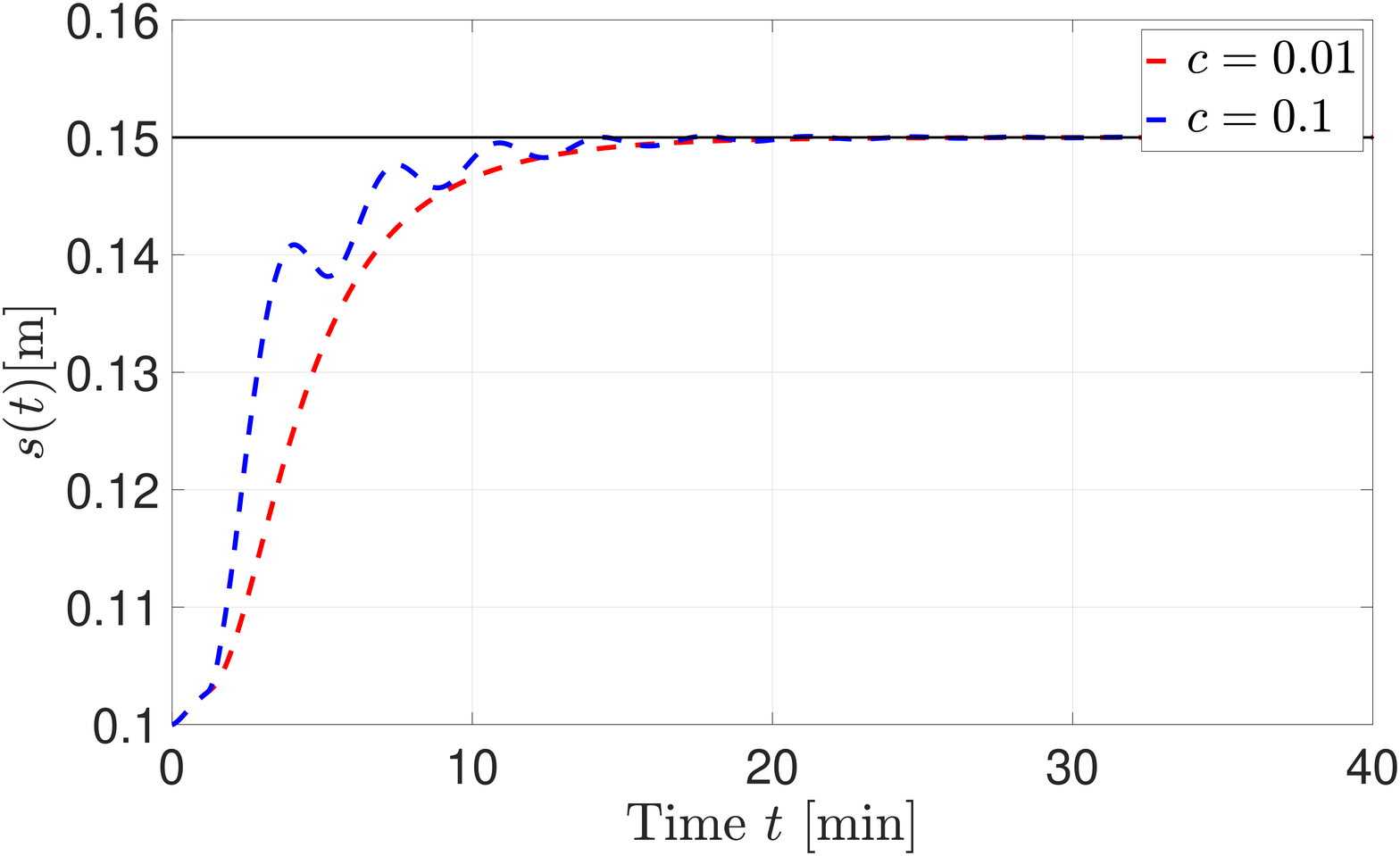}
} 
\subfloat[Positivity of the heat input is satisfied with smaller gain, but is violated with larger gain. ]{\includegraphics[width=3.2in]{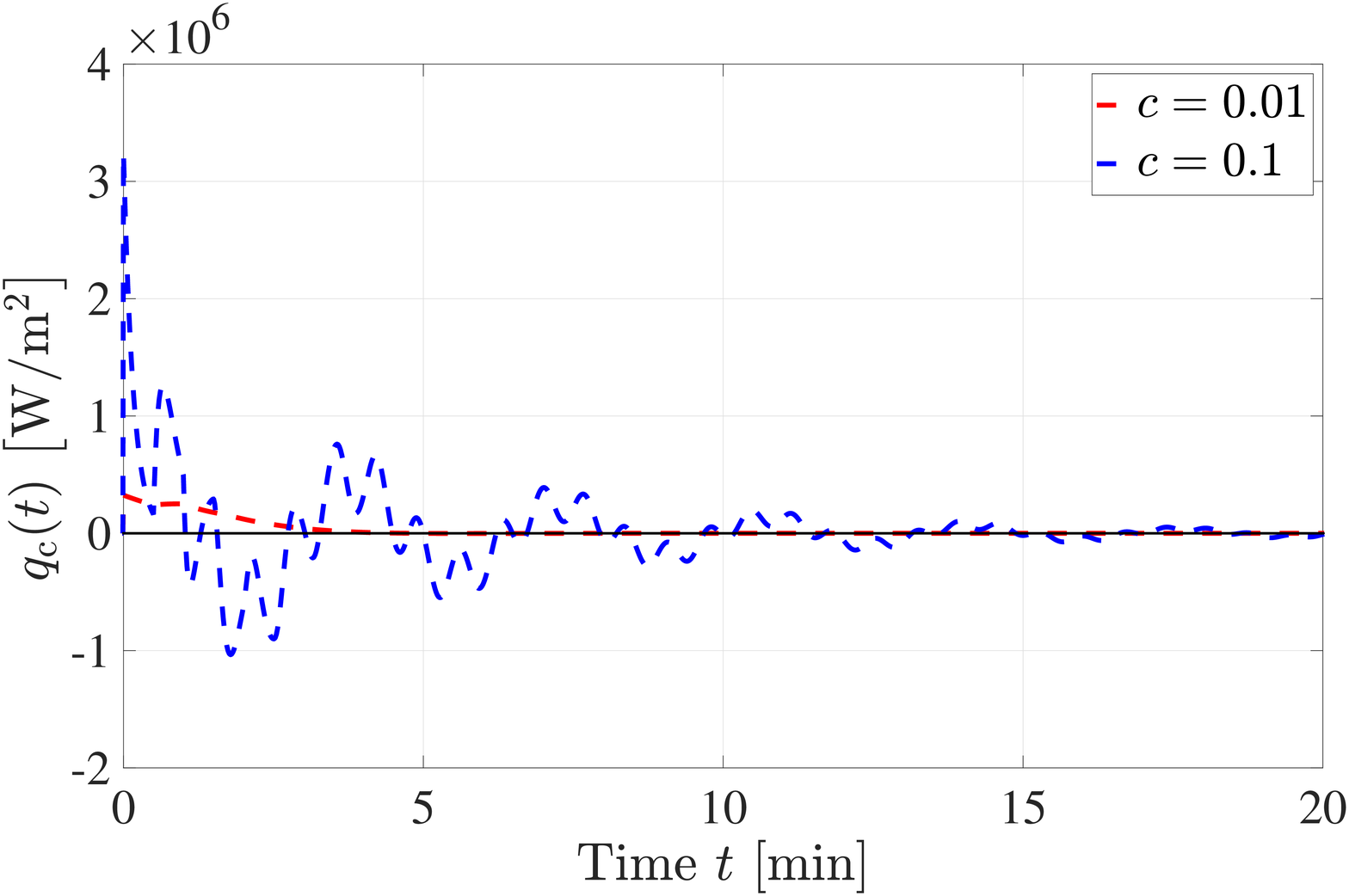}}\\
\subfloat[The boundary temperature keeps above the melting temperature with smaller gain, while it reaches below the melting temperature with larger gain, which violates the temperature condition for the liquid phase. ]{\includegraphics[width=3.0in]{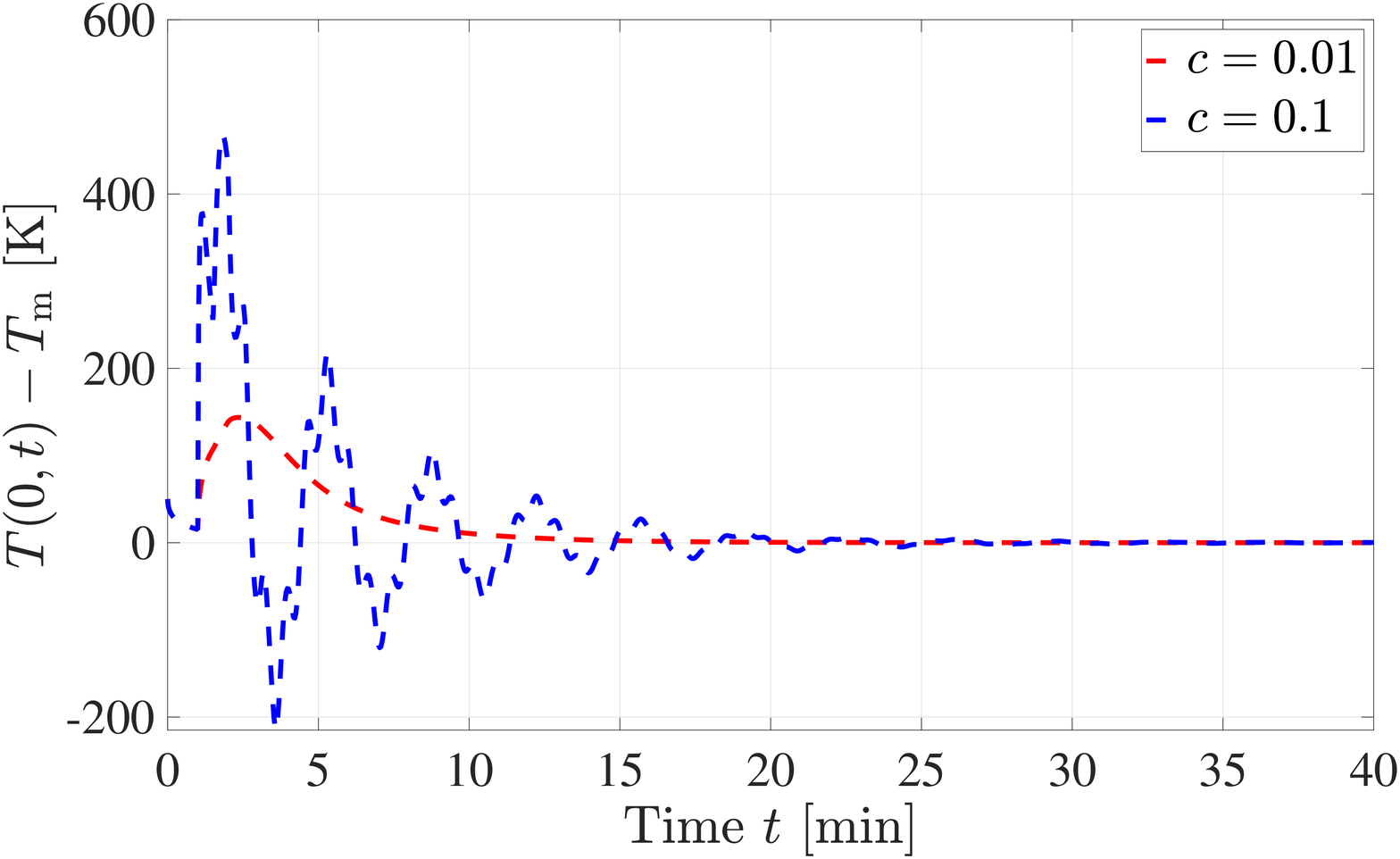}}
\caption{The closed-loop response under the "underestimated" delay mismatch with $D = 30 $ [sec] and $\del D = 30$ [sec]. The simulations are conducted with the control gain $c = 0.01$ [/sec] (red) and $c = 0.1$ [/sec]  (blue). The delay-robustness is observed only with smaller gain in terms of the model validity. }
\label{fig:robust1}
\end{figure} 
\begin{figure}[t] 
\center
\subfloat[The interface position converges to the setpoint without overshooting. ]{\includegraphics[width=3.0in]{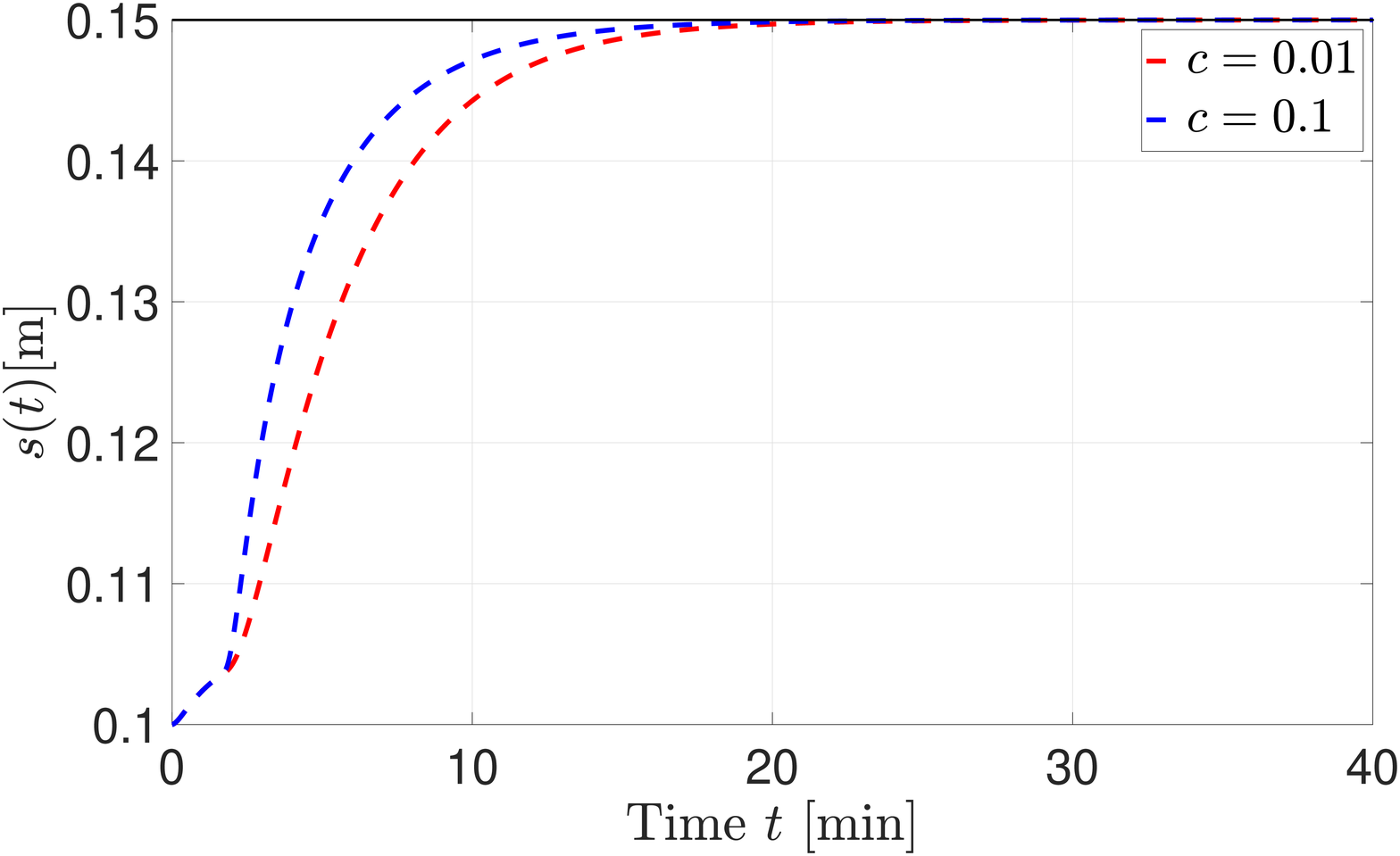}
}
\subfloat[The heat input maintains positive. ]{\includegraphics[width=3.2in]{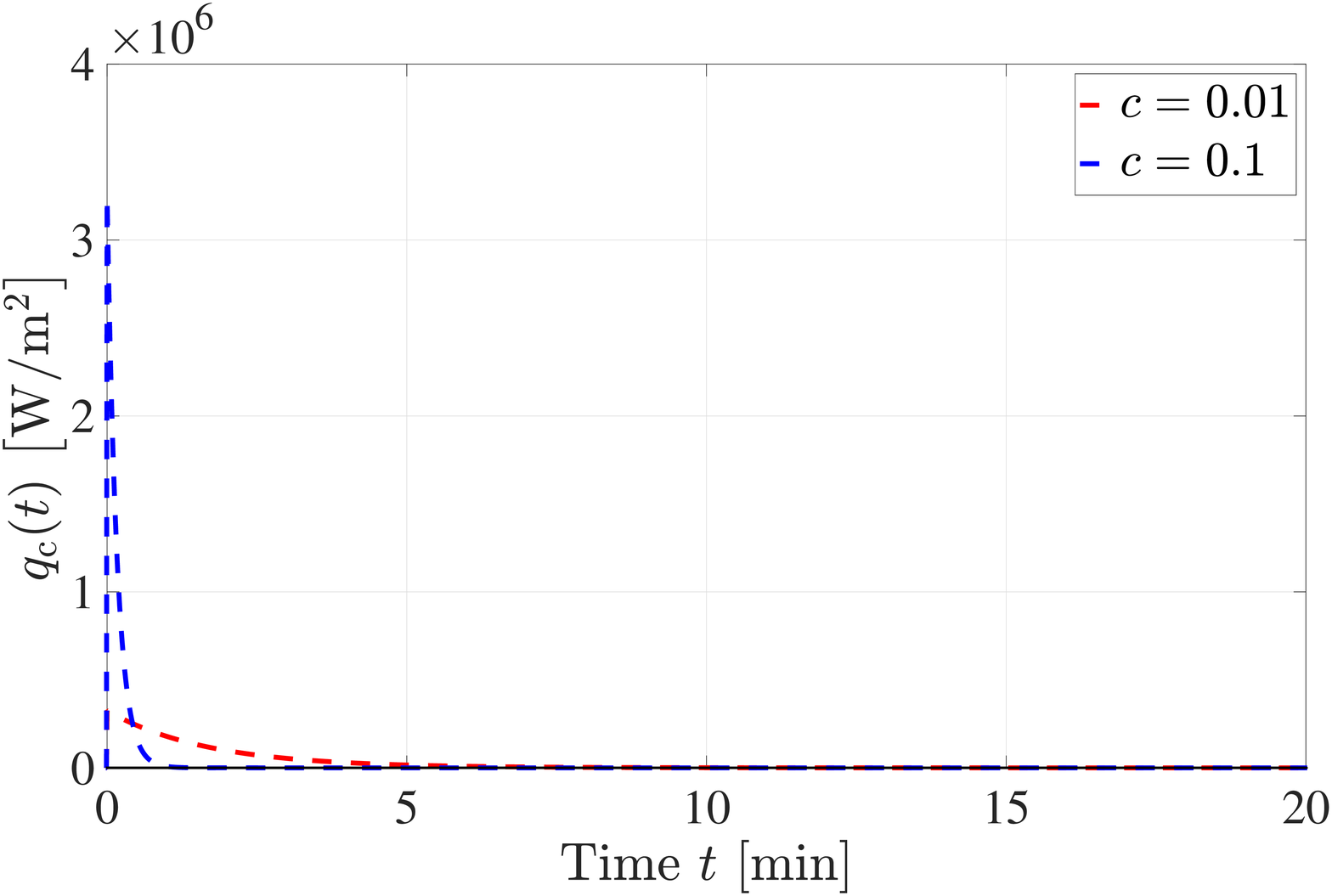}}\\
\subfloat[The boundary temperature is greater than melting temperature, which satisfies the temperature condition for the liquid phase. ]{\includegraphics[width=3.0in]{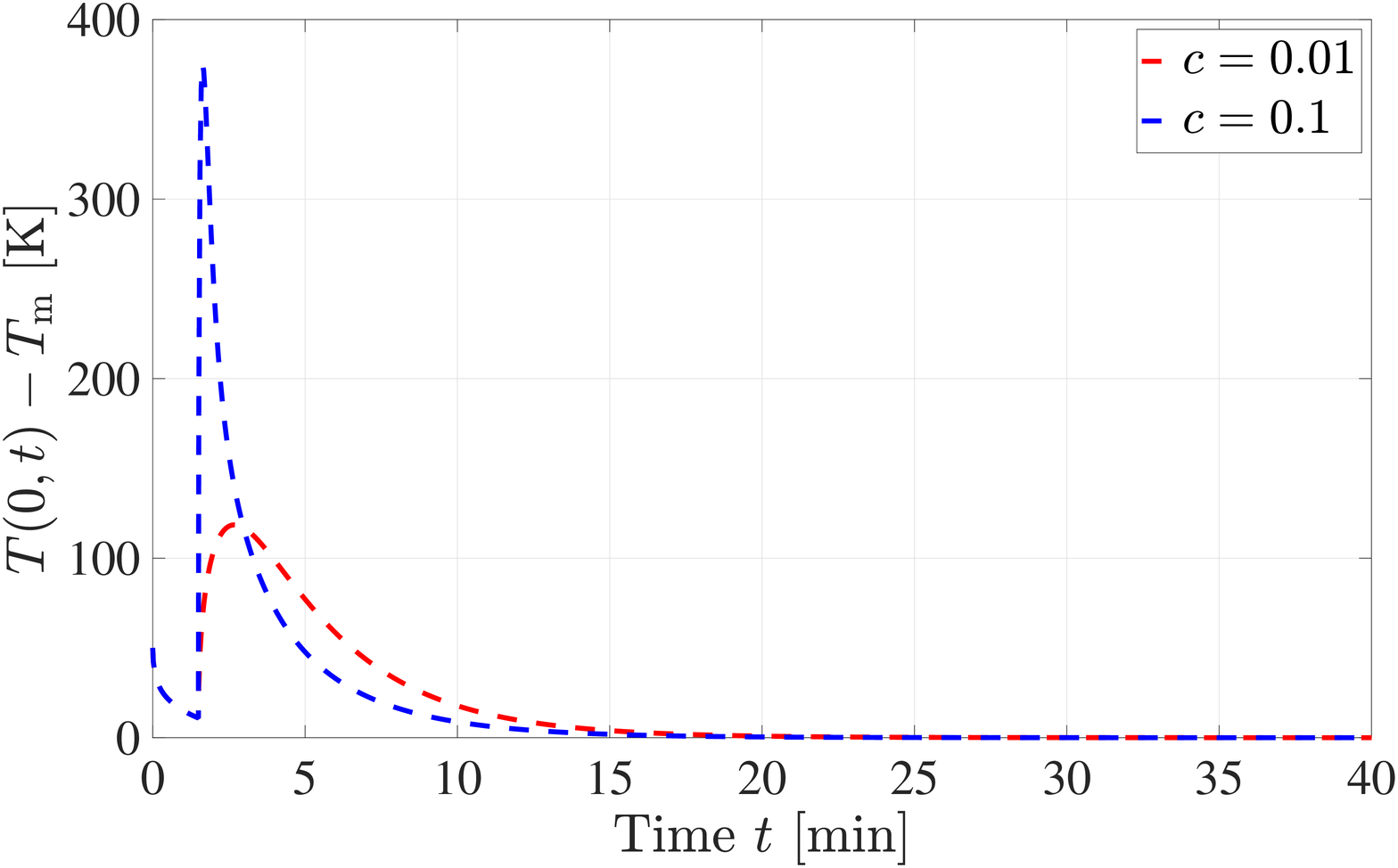}}
\caption{The closed-loop response under the "overestimated" delay mismatch with $D = 90 $ [sec] and $\del D = -30$ [sec]. The simulations are conducted with the control gain $c = 0.01$ [/sec] (red) and $c = 0.1$ [/sec]  (blue). In this case, all the constraints for the model validity are satisfied with both smaller gain and larger gain. }
\label{fig:robust2}
\end{figure}


\section{Conclusions}\label{conclusion}
This paper presented a boundary control design for the one-phase Stefan problem under an actuator delay to achieve exponential stability in the ${\cal H}_1$-norm using full state measurements. Combining our previous contribution \cite{Shumon16} with the contribution in \cite{krstic2009}, two nonlinear backstepping transformations for moving boundary problems are utilized and the associated backstepping controller is proved to remain positive with a proper choice of the setpoint due to energy conservation. Then, some physical constraints required for the validity of the model are verified and the proof of stability. The equivalence to the exact prediction of the nominal control law is shown, and the robustness to delay mismatch is analyzed. 

An analogous state estimation problem is not a trivial extension, especially when the sensor delay appears in the measurement of the interface position. This problem is motivated by the estimation of sea ice melting \cite{Shumon17seaice}, where the thickness of the sea ice can be measured by satellites which causes a time delay to acquire the data through some communication. In such a case, the interface position at current time needs to be estimated, and the estimated interface position should be incorporated in the domain of the estimated temperature profile, which leads to a completely different structure of the system from the plant for control problem we have studied. This state estimation for Stefan problem under a sensor delay is considered as a future work. 


\section*{Acknowledgments}










\appendix


\section{Calculation of double integrals} \label{app:f}
We show \eqref{ftfin} from \eqref{ftdef}. Substituting the inverse transformation \eqref{inv2}, ($v(x,t) = z(x,t) - \int_{x}^{0} \mu (x-y) z(y,t) dy -\frac{\beta}{\alpha} \mu(x) \int_{0}^{s(t)} \zeta(y)  w(y,t) dy - \zeta(s(t)) \mu(x) X(t)$) into \eqref{ftdef} yields 
\begin{align} \label{fapp} 
f(t) =  \int_{- \Delta D}^{ 0} v(x,t) dx = &\int_{- \Delta D}^{ 0} z(x,t) dx -  \int_{- \Delta D}^{ 0}\int_{x}^{0} \mu (x-y) z(y,t) dy dx - \int_{- \Delta D}^{ 0} \mu(x) dx \left( \frac{\beta}{\alpha} \int_{0}^{s(t)} \zeta(y)  w(y,t) dy + \zeta(s(t)) X(t) \right) .
\end{align} 
Since $\mu(x) = c e^{cx}$ (see \eqref{mudef}), the followings are obtained: 
 \ba \label{integmu} 
  \int_{- \Delta D}^{ 0} \mu(x) dx = (1 - e^{ - c \del D}) , 
  \end{align} 
  \ba 
   \int_{- \Delta D}^{ 0}\int_{x}^{0} \mu (x-y) z(y,t) dy dx =& \int_{- \Delta D}^{ 0}\int_{x}^{0} c e^{c (x-y)} z(y,t) dy dx =  e^{cx} \int_{x}^{0} e^{- cy} z(y,t) dy \left|_{x=- \Delta D}^{x=0} +  \int_{- \Delta D}^{ 0} z(x,t) dx   \right.  \notag\\
 = & - e^{- c \Delta D} \int_{- \Delta D}^{0} e^{- cy} z(y,t) dy +  \int_{- \Delta D}^{ 0} z(x,t) dx .  \label{integ2mu} 
 \end{align} 
Substituting \eqref{integmu} and \eqref{integ2mu} into \eqref{fapp}, we arrive at 
 \begin{align} \label{fappfin} 
 f(t) = \int_{- \Delta D}^{ 0} v(x,t) dx  = &   \int_{- \Delta D}^{ 0} e^{- c(x + \Delta D)} z(x,t) dx - ( 1 - e^{ - c \Delta D} ) \left( \frac{\beta}{\alpha} \int_{0}^{s(t)} \zeta(y)  w(y,t) dy + \zeta(s(t)) X(t) \right) , 
  \end{align} 
  which is same as \eqref{ftfin}. The time derivative of \eqref{fappfin} is given with the help of the target system, as 
\begin{align} 
f_{t}(x,t) =  &- \int_{- \Delta D}^{ 0} e^{- c(x + \Delta D)} z_{x}(x,t) dx - ( 1 - e^{ - c \Delta D} ) \left(   \frac{\beta}{\alpha} \int_{0}^{s(t)} \zeta(y)  w_{t}(y,t) dy + \zeta(s(t)) \dot{X}(t) + \dot{s}(t) \zeta'(s(t)) X(t) \right) \notag\\
= & - \left( e^{- c(x + \Delta D)} z(x,t)|_{x= - \Delta D}^{x =0} + c \int_{- \Delta D}^{ 0} e^{- c(x + \Delta D)} z(x,t) dx \right) \notag\\
&- ( 1 - e^{ - c \Delta D} ) \left( z(0,t) - \frac{c\beta}{\alpha}  \int_{0}^{s(t)} \zeta(y)  w(y,t) dy + \dot{s}(t) X(t) \left( \frac{c}{\alpha} \int_{0}^{s(t)} \zeta(y)  dy + \zeta'(s(t)) \right) - c X(t) \zeta(s(t)) \right)
\end{align}
Since $ \zeta(x) = \frac{1}{\beta} {\rm cos}\left( \sqrt{\frac{c}{\alpha}}x\right)$ (see \eqref{mudef}), we have 
 \begin{align} 
f_{t}(x,t) =  & - z(0,t) + z(-\Delta D,t) - c \int_{- \Delta D}^{ 0} e^{- c(x + \Delta D)} z(x,t) dx + ( 1 - e^{ - c \Delta D} ) \left( \frac{c\beta}{\alpha}  \int_{0}^{s(t)} \zeta(y)  w(y,t) dy - c X(t) \zeta(s(t))   \right) \notag\\
= & - \int_{- \Delta D}^{ 0} z_{x}(x,t) dx   - c \int_{- \Delta D}^{ 0} e^{- c(x + \Delta D)} z(x,t) dx + ( 1 - e^{ - c \Delta D} ) \left( \frac{c\beta}{\alpha}  \int_{0}^{s(t)} \zeta(y)  w(y,t) dy - c X(t) \zeta(s(t))   \right) . 
\end{align}

\section{Proof of Lemma 6 } \label{app:lemma} 
By rescaling the time $t$ by defining $\bar t = c t$, $p(\bar t ) = q_{c}(t)$, $\bar D = cD$, $\bar \Delta = c \Delta D$, and by dropping the bar on the variables to reduce the notational burdens, the delay differential equation \eqref{robustcontder} is rewritten as 
 \begin{align} \label{eq:eq-x}
 \dot{p}( t ) =& - p( t ) + p( t - D) - p( t  - D - \Delta  ) , \\
 p_0 =& \psi_{0}  >0.
\end{align} 
Hence, to derive Lemma 6, it suffices to show that there exists $\del ^{**}>0$ such that the solution to \eqref{eq:eq-x} with a positive initial condition is positive. We deduce it by proving the following two lemmas. 

Let us define $T_1 = \min \left\{D, D + \Delta \right\}$, $T_2 = \max \left\{D, D + \Delta \right\}$, $M_p = \max_{s \in [- T_2,0]} |p(s)|$, and $M_p' =  \max_{s \in [- T_2,0]} |\dot{p}(s)|$. 
\begin{lemma}
	\label{lem:halanay}
	There exists $\Delta^*$ such that, if $|\Delta| < \Delta^*$, it holds
	\begin{align}
		|p(t)| \leq M_p e^{-\gamma t} \,, t \geq 0 \label{halanayineq} 
	\end{align}
	for some $\gamma \in (0,1)$. 
\end{lemma}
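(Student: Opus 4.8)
The plan is to read \eqref{eq:eq-x} as a small delayed perturbation of the exponentially stable scalar ODE $\dot p=-p$ and to capture the perturbation through a Halanay-type differential inequality for $|p(t)|$. The decisive structural fact is that, although $p(t-D)$ and $p(t-D-\Delta)$ are each $O(1)$, their difference is $O(\Delta)$; writing
\begin{align}
p(t-D)-p(t-D-\Delta)=\int_{t-D-\Delta}^{t-D}\dot p(\sigma)\,d\sigma
\end{align}
shows the delayed feedback has size at most $|\Delta|$ times a supremum of $|\dot p|$ over an interval of length $|\Delta|$ near $t-D$, which for small $\Delta$ is dominated by the instantaneous decay $-p(t)$.

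First I would bound $\dot p$ along the trajectory. For $\sigma\geq0$, equation \eqref{eq:eq-x} gives directly $|\dot p(\sigma)|\leq|p(\sigma)|+|p(\sigma-D)|+|p(\sigma-D-\Delta)|\leq 3\sup_{s\in[\sigma-T_2,\sigma]}|p(s)|$, and one checks that the arguments $\sigma,\sigma-D,\sigma-D-\Delta$ never drop below $-T_2$, so only the given data enter; for $\sigma<0$ the derivative is that of the initial datum and is bounded by $M_p'$. Substituting these bounds into the integral identity, and treating $|p(t)|$ with its upper right Dini derivative $D^{+}$ to accommodate zeros of $p$, I obtain, for $t$ past the first step (i.e. $t\geq D+|\Delta|$), the scalar Halanay inequality
\begin{align}
D^{+}|p(t)|\leq-|p(t)|+3|\Delta|\,\sup_{s\in[t-\tau,t]}|p(s)|,\qquad \tau=2(D+|\Delta|),
\end{align}
while the finite initial transient $0\leq t<D+|\Delta|$ is handled separately using $M_p'$.

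Then I would set $\Delta^{*}=\tfrac13$ (any smaller value also works): for $|\Delta|<\Delta^{*}$ the instantaneous coefficient $a=1$ strictly dominates the delayed coefficient $b=3|\Delta|$, so the Halanay comparison lemma yields $|p(t)|\leq M_p\,e^{-\gamma t}$, with $\gamma$ the unique positive root of $\gamma=1-3|\Delta|\,e^{\gamma\tau}$. Since this right-hand side equals $1-3|\Delta|>0$ at $\gamma=0$ and tends to $1$ as $\Delta\to0$, the root satisfies $\gamma\in(0,1)$, which is the assertion.

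The step I expect to be the main obstacle is the delay-horizon bookkeeping rather than any single estimate. Converting the difference term into $|\Delta|\sup|\dot p|$ and then re-expressing $\dot p$ through \eqref{eq:eq-x} deepens the nominal delay from $T_2$ to $\tau=2(D+|\Delta|)$, so one must verify that the supremum window in the Halanay inequality is genuinely populated by the solution: precisely, that on the segment where $\sigma<0$ the estimate rests on $M_p'$, while on $\sigma\geq0$ the relevant values of $p$ remain within $[-T_2,0]$ on the data side and are therefore controlled by $M_p$. This is exactly why both $M_p$ and $M_p'$ appear in the setup, and the care needed to reconcile the clean constant $M_p$ with the transient contribution — by restarting the Halanay estimate after the finite initial interval and absorbing lower-order terms into the rate for $\Delta$ small — is the delicate point. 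Existence and uniqueness of $p$ on $[0,\infty)$ by the method of steps and the legitimacy of the Dini-derivative manipulation are routine and I would simply invoke them.
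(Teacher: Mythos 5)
Your proposal is correct and follows essentially the same route as the paper: the paper converts $p(t-D)-p(t-D-\Delta)$ into $\Delta\,\dot p(a(t))$ via the mean-value theorem (your integral form is equivalent), bounds $|\dot p|$ by $3\max|p|$ from the equation itself, and invokes the Halanay inequality with the same threshold $\Delta^*=\tfrac13$ and the same characteristic equation for $\gamma\in(0,1)$. Your treatment is in fact slightly more careful than the paper's on the points you flag (Dini derivatives of $|p|$, the $\sigma<0$ transient handled via $M_p'$, and keeping the deepened delay horizon consistent between the supremum window and the characteristic equation), but these are refinements of the identical argument rather than a different one.
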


\begin{proof}
The proof of this lemma follows from Halanay inequality. By Mean-Value theorem, there exists $a(t) \in [t - T_2, t - T_1]$ such that $\Delta \dot p(a(t)) = p( t - D) - p( t  - D - \Delta  ) $ holds, $\forall t\geq 0$. Hence, by the use of such $a(t)$, \eqref{eq:eq-x} is given by 
\begin{align} \label{app:2:1} 
	\dot p(t) =& - p(t) + \Delta \dot p(a(t)) , \quad t\geq 0
\end{align}
Moreover, applying triangle inequality to \eqref{eq:eq-x}, we get 
\begin{align}
	|\dot p(t)| \leq& 3 \max_{s \in [- T_2,0]} p(t+s), \quad t\geq 0. \label{app:2:2} 
\end{align}
Applying \eqref{app:2:2} to \eqref{app:2:1} yields 
\begin{align} \label{app:2:3}
	\dot p(t) \leq& - p(t) + 3 |\Delta| \max_{s \in [-2 T_2,0]} p(t+s),  \quad t\geq 0 . 
\end{align}
Applying Halanay inequality to \eqref{app:2:3} leads to \eqref{halanayineq} with $\Delta^* = \frac{1}{3}$, where $\gamma$ is a solution to 
\begin{align} 
\gamma + 3 |\Delta |e^{\gamma T_2} - 1 = 0. \label{gam_sol} 
\end{align} 
Since the left hand side of \eqref{gam_sol} is a monotonically increasing function in $\gamma$ with having negative value at $\gamma = 0$ and positive value at $\gamma = 1$, the solution to \eqref{gam_sol} satisfies $\gamma \in (0,1)$, which completes the proof of Lemma \ref{lem:halanay}.
\end{proof}

Hereafter we assume $| \Delta | \leq \Delta^* = \frac{1}{3}$. 

\begin{lemma} \label{lem:8} 
	There exists $\Delta^{\star\star} >0$ such that, if $|\Delta| < \Delta^{\star\star}$ and if $\psi_0$ is positive-valued, the solution of~\eqref{eq:eq-x} satisfies
	\begin{align}
		p(t) >& 0 \,, \quad t\geq 0 . \label{eq:lem8}
	\end{align}
\end{lemma}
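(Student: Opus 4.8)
The plan is to show that, for small $|\Delta|$, the mismatch term perturbs the \emph{decay rate} of the nominal dynamics rather than its sign, so that the strictly positive nominal solution cannot be driven through zero. At $\Delta=0$, \eqref{eq:eq-x} collapses to $\dot p=-p$ with solution $p(0)e^{-t}>0$; the whole difficulty is to control the additional term $g(t):=p(t-D)-p(t-D-\Delta)$. A first attempt would combine the variation-of-constants identity
\begin{align}
p(t)=p(0)e^{-t}+\int_0^t e^{-(t-\sigma)}g(\sigma)\,d\sigma \notag
\end{align}
with a first-zero-crossing argument: assume $t^\ast:=\inf\{t>0:p(t)=0\}$ is finite, so that $p>0$ on $[-T_2,t^\ast)$ and $\dot p(t^\ast)\le0$, and try to contradict $p(t^\ast)=0$ by bounding the integral.

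The main obstacle is that the crude estimate $|g(\sigma)|\le|\Delta|\sup|\dot p|$ obtained from the Mean-Value form \eqref{app:2:1} is too lossy. Feeding in the decay $|\dot p(\sigma)|\le Ce^{-\gamma\sigma}$ that follows from Lemma \ref{lem:halanay} only yields a bound of order $\frac{|\Delta|}{1-\gamma}e^{-\gamma t}$ for the integral; since $1-\gamma=3|\Delta|e^{\gamma T_2}$ by \eqref{gam_sol}, this prefactor is $O(1)$, whereas the nominal term $p(0)e^{-t}$ decays strictly faster (rate $1>\gamma$). The perturbation therefore \emph{appears} to dominate for large $t$, and positivity cannot be read off. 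The resolution is that the modulus bound discards a genuine cancellation: to leading order the mismatch is a rate shift, not a sign change.

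To expose this I would remove the nominal decay through $y(t):=e^{t}p(t)$, which recasts \eqref{eq:eq-x} as
\begin{align}
\dot y(t)=e^{D}\bigl(y(t-D)-y(t-D-\Delta)\bigr)-e^{D}\bigl(e^{\Delta}-1\bigr)y(t-D-\Delta), \notag
\end{align}
with $y$ strictly positive on the initial interval. Here the second term is $-\Delta e^{D}y(t-D-\Delta)+O(\Delta^2)$ and the first is $e^{D}$ times a difference quotient of $y$ over a window of length $|\Delta|$; hence $\dot y=-\Delta e^{D}y+(\text{higher order})$, which makes the rate shift explicit. I would then close the argument with a Halanay-type \emph{lower} bound for $y$: using the a priori decay $|p(t)|\le M_pe^{-\gamma t}$ of Lemma \ref{lem:halanay} (equivalently a uniform control of $y$) to dominate the delay-spread and $O(\Delta^2)$ corrections, one obtains $y(t)\ge\bigl(\min_{[-T_2,0]}y\bigr)e^{-K|\Delta|t}>0$ for a constant $K=K(D)$, provided $|\Delta|<\Delta^{\star\star}$ with $\Delta^{\star\star}$ small enough (its size degrading through the factors $e^{D}$). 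Since $p(t)=e^{-t}y(t)$, this gives $p(t)>0$ for all $t\ge0$, i.e. \eqref{eq:lem8}.

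The hard step is the last one: rigorously justifying that, after the integrating factor, the delay-spread term $y(t-D)-y(t-D-\Delta)$ and the $O(\Delta^2)$ remainder contribute only corrections dominated by the rate shift $-\Delta e^{D}y$. This is precisely where the exponential estimate of Lemma \ref{lem:halanay} is needed to keep all constants uniform in $t$, and where the dependence of $\Delta^{\star\star}$ on $D$ originates.
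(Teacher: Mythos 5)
Your reformulation $y(t)=e^{t}p(t)$ and the identity $\dot y(t)=e^{D}\bigl(y(t-D)-y(t-D-\Delta)\bigr)-e^{D}(e^{\Delta}-1)y(t-D-\Delta)$ are correct, and your diagnosis of why the naive modulus bound fails --- the factor $1/(1-\gamma)=O(1/|\Delta|)$ forced by \eqref{gam_sol} cancels the smallness of $\Delta$ --- is exactly right. But the step you defer to the end is not a technicality; it is the entire proof, and the route you sketch for it does not close. The difference-quotient term equals $e^{D}\Delta\,\dot y(\xi)$ for some $\xi$ in the delay window, and $\dot y(\xi)=e^{\xi}\bigl(p(\xi-D)-p(\xi-D-\Delta)\bigr)$ is \emph{not} controlled pointwise by $y$: the only a priori control you invoke is the envelope $|p(t)|\le M_{p}e^{-\gamma t}$ of Lemma \ref{lem:halanay}, i.e. $|y(t)|\le M_{p}e^{(1-\gamma)t}$, which \emph{grows} at rate $1-\gamma=3|\Delta|e^{\gamma T_2}$. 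Feeding that into your inequality for $\dot y$ and integrating produces a correction of size $\frac{|\Delta|}{1-\gamma}M_{p}\bigl(e^{(1-\gamma)t}-1\bigr)=O(1)\cdot\bigl(e^{(1-\gamma)t}-1\bigr)$, unbounded in $t$ and in no way dominated by the rate-shift term $-\Delta e^{D}y$. So the assertion ``$\dot y=-\Delta e^{D}y+\text{higher order}$'' is circular: to show the delay-spread term is higher order you already need the two-sided exponential control of $y$ that you are trying to establish, and there is no off-the-shelf ``lower Halanay inequality'' that supplies it.

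What is missing is a bootstrap that feeds the \emph{lower} bound back into the estimate of the perturbation. This is precisely what the paper's proof does: it propagates a lower envelope $p(t)\ge ke^{-\tilde\gamma_{n}t}$ by induction over the time grid $t_{n}$ of \eqref{app:2:7}, paying a controlled degradation of the rate $\tilde\gamma_{n}\to\tilde\gamma_{n+1}$ at each step through the relation $e^{(1-\tilde\gamma_{n})t_{n}}=e^{(1-\tilde\gamma_{n+1})t_{n}}+C_{0}$, and applying the variation-of-constants formula for \eqref{app:2:1} together with the decay of Lemma \ref{lem:halanay} only over the current interval $[t_{n},t]$, whose length is calibrated so that $e^{(1-\gamma)t}-e^{(1-\gamma)t_{n}}\le 1$ and the accumulated correction stays below the margin $C_{0}$. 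Positivity then follows because $ke^{-\tilde\gamma_{n+1}t}>0$ on each $[t_{n},t_{n+1}]$ and the grid is unbounded. If you want to salvage the integrating-factor formulation (which, if completed, would yield the stronger uniform envelope $p(t)\ge C e^{-(1+K|\Delta|)t}$), you would need an analogous interval-by-interval induction, or a fixed-point argument establishing the upper and lower envelopes of $y$ simultaneously; as written, the proposal stops exactly where the difficulty begins.
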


\begin{proof}
	Assume that the positive lower and upper bounds of $T_1$ and $T_2$ such that $\underline T \leq T_1 \leq T_2 \leq \bar T$ are known. This is verified supposing that $\Delta$ is small enough. 
	
	\textbf{(i) Prove \eqref{eq:lem8} for $t \in [0, T_1]$}:  \\
	Applying the variation of constant formula to \eqref{app:2:1}, it holds 
	\begin{align}
		p(t) =& e^{-t}p(0) + \Delta \int_0^t e^{-(t-s)} \dot p(a(s)) ds, \label{app:b14} 
	\end{align}
	in which $a(s) \in [s-T_2,s-T_1]$. Since $a(s) \in [-T_2, 0]$ for $s \in [0, t]$ and $t \in [0, T_1]$, \eqref{app:b14} leads to 
	\begin{align}
		p(t) \geq & e^{-t}p(0) - |\Delta| M_{p}'\int_0^t e^{-(t-s)}  ds
			= e^{-t}p(0) - |\Delta| M_{p}' (1 - e^{-t}).  \label{app:2:5} 
	\end{align}
	Therefore, choosing $k = M_p - \epsilon >0$ in which $\epsilon>0$ is small enough, and provided that 
	\begin{align}
		|\Delta| \leq \Delta_1 := \frac{\epsilon}{M_{p}' (e^{\overline T} - 1)} , 
	\end{align}
	\eqref{app:2:5} leads to 
	\begin{align}
		\label{eq:ineq-T1}
		p(t) \geq &  e^{-t}p(0) - \epsilon \frac{1 - e^{-t}}{e^{\overline T} - 1}, \notag\\
		\geq & (p(0) - \epsilon)e^{-t}, \notag\\
		\geq & k e^{-t}, \quad t \in [0,T_1]. 
	\end{align}

	\textbf{(ii) Prove \eqref{eq:lem8} for $t \in [T_1, T_2]$}:  \\
	We evaluate $ \Delta \dot p(a(t))$ in \eqref{app:2:1} by separating the cases $a(t) \leq 0$ and $a(t) > 0$ as follows 
	\begin{align}
	\Delta \dot p(a(t)) \leq \left\{ \begin{aligned}
		&	|\Delta| \max_{s\in [-T_2,0]} |\dot p(s)| & \quad \mbox{if } a(t) \leq 0 \\
		&	3 |\Delta| \max_{s \in [-T_2,0]} |p(t+s)| \leq  3 |\Delta| M_{p} & \quad \mbox{if } a(t) > 0
		\end{aligned} \right.
	\end{align}
	where we used Lemma~\ref{lem:halanay} in the second line considering $|p(t) | \leq M_p e^{-\gamma t} \leq M_p$ for $t \geq 0$. Combining the two cases, it holds that $\Delta \dot p(a(t)) \leq | \Delta | M$ where	 $M = \max\left\{\max_{s\in [-T_2,0]} |\dot p(s)|, 3 \max_{s \in [-T_2,0]} |p(s)|  \right\}$. Applying this inequality and the variation of constant formula to \eqref{app:2:1}, as previously, one concludes that
	\begin{align}
		p(t) =& e^{-(t-T_1)}p(T_1) + \int_{T_1}^t e^{-(t-s)} \Delta \dot p(a(s)) ds, \\
			\geq& e^{-t} k - |\Delta| M \int_{T_1}^t e^{-(t-s)} ds =  e^{-t} k - |\Delta| M (1 - e^{-(t-T_1)}),  \label{app:2:6} 
	\end{align}
	in which we used $p(T_1) \geq k e^{-T_1}$ by \eqref{eq:ineq-T1}. Consequently, defining $\tilde \gamma_0 = 1 + \epsilon_0$ for some $\epsilon_0 >0$, and provided that
	\begin{align}
		|\Delta| \leq \Delta_{2} :=  \min\left\{ \Delta_1, \frac{k(1 - e^{-\epsilon_0 \underline T})}{M (e^{\overline T} - e^{\underline T})} \right\}, 
	\end{align}
	one concludes from~ \eqref{app:2:6} that
	\begin{align}
		p(t) \geq k e^{-\tilde \gamma_0 t} \,, \quad t \in [0,T_2], 
	\end{align}
		
	\textbf{(iii) Prove \eqref{eq:lem8} for $t \geq T_2$}:  \\
	Finally, we consider $t\geq T_2$. We define the sequence
	\begin{align}
		t_0 =& T_2 \\ 
		t_{n+1} =& \frac{1}{1-\gamma} \ln \left( 1 + e^{(1-\gamma)t_n} \right) \,, \quad n \in \mathbb{N} \label{app:2:7} 
	\end{align}
	This sequence is increasing as \eqref{app:2:7} leads to $t_{n+1} \geq \frac{1}{1-\gamma} \ln \left( e^{(1-\gamma)t_n} \right) = t_n $. If it were bounded, then the sequence $t_n$ would converge to a point $t^\star$ such that
	\begin{align}
		t^\star =& \frac{1}{1-\gamma} \ln \left( 1 + e^{(1-\gamma) t^\star} \right) \Leftrightarrow e^{(1-\gamma)t^\star}  = 1 + e^{(1-\gamma)t^\star}, 
	\end{align}
	from which we see that such $t^\star$ does not exist. Therefore, the sequence $(t_n)$ is unbounded. Consequently, there exists $n \in \mathbb{N}$ such that $t_n \leq t < t_{n+1}$. Moreover, we define  the sequence 
	\begin{align}
		\tilde \gamma_0 &= 1 + \epsilon_0, \\
		\tilde \gamma_{n+1} &=  1 - \frac{1}{t_n}\ln \left( e^{(1- \tilde \gamma_n)t_n} - C_0 \right) , 
	\end{align}
	where $C_0 \in (0,1)$. By definition, one has $\tilde \gamma_{n+1} > \tilde \gamma_n> 1$ and the following relation
	\begin{align}
		e^{(1-\tilde \gamma_n)t_n} &= e^{(1-\tilde \gamma_{n+1})t_n} + C_0. \label{app:seq2} 
	\end{align}

	Using these sequences, we prove the following statement. \\
	\textbf{(\#)}
	$\forall n \in \N$ it holds $p(t) \geq k e^{-\tilde \gamma_n t}$ for $t \in [0,t_n]$. \\
	The statement (\#) is shown for $n=0$ through (i) and (ii). We use induction approach, namely, assume the statement (\#) is true for $n$, and we prove the statement for $n+1$. It is clear that the statement holds for $t \in [0, t_{n}]$ by the assumption, and therefore, we consider $t \in [t_{n}, t_{n+1}]$. Then, using the variation of constant formula to \eqref{app:2:1} and Lemma~\ref{lem:halanay}, it holds 
	\begin{align}
		p(t) =& e^{-(t-t_n)} p(t_n) + \int_{t_n}^t e^{-(t-s)} \Delta \dot p(a(s)) ds , \notag \\
		\geq & e^{-t} e^{(1-\tilde \gamma_n)t_n} k 
			- 3 |\Delta|  M_{p} \int_{t_n}^t e^{-(t-s)} e^{-\gamma (s-T_2)} ds , \notag \\
		\geq& e^{-t} e^{(1-\tilde \gamma_n)t_n} k 
			- \frac{3 |\Delta| e^{\gamma T_2} M_p}{1-\gamma} ( e^{- \gamma t} - e^{- t + (1-\gamma)t_n}) . \label{appB:pineq} 
	\end{align}
	Then, by the use of \eqref{appB:pineq}, a condition for $p(t) \geq k e^{-\gamma_{n+1}t}$, $t\in [0,t_{n+1}]$ is
	\begin{align}
		e^{(1-\tilde \gamma_n)t_n} k 
			- \frac{3 |\Delta| e^{\gamma T_2} M_p}{1-\gamma} ( e^{(1- \gamma) t} - e^{(1-\gamma)t_n})
		\geq& k e^{(1-\tilde \gamma_{n+1})t}.  \label{app:suff} 
	\end{align}
	With the help of $t \in [t_{n}, t_{n+1}]$, \eqref{app:2:7}, and \eqref{app:seq2}, a sufficient condition on $\Delta$ to satisfy \eqref{app:suff} for $\forall n \in \N$  is 
	\begin{align}
		|\Delta| \leq \Delta_{3} := 
		  \frac{k (1-\gamma) C_0}{3 e^{\gamma T_2} M_{p}} . 
	\end{align}
	Therefore, by construction, if
	\begin{align}
		|\Delta| \leq 
			\min\left\{ \Delta_1,  \Delta_2,  \Delta_3\right\}, 
	\end{align}
	then for any $t\geq 0$, there exists $n \in \mathbb{N}$ such that $t \in [t_n,t_{n+1}]$ and $\tilde \gamma_{n+1} >1$ such that
	\begin{align}
		p(t) \geq k e^{-\tilde \gamma_{n+1}t} > 0, 
	\end{align}
which completes the proof of Lemma \ref{lem:8}. 
\end{proof}

\section{Norm estimate} \label{app:bound} 
\subsection{Bound of $f(t)^2$}\label{app:fbound}
 Applying Young's inequality, we have 
\ba \label{app:ftineq}
 f(t)^2 \leq & 2 \left( \left( \int_{- \Delta D}^{ 0} e^{- c(x + \Delta D)} z(x,t) dx \right)^2  + 2 ( 1 - e^{ - c \Delta D} )^2 \left( \frac{\beta^2}{\alpha^2}\left(  \int_{0}^{s(t)} \zeta(y)  w(y,t) dy \right)^2 + \zeta(s(t))^2 X(t)^2 \right) \right) . 
  \end{align} 
  For both cases of $\del D>0$ and $\del D<0$, by applying Cauchy-Schwarz inequality, we deduce 
  \begin{align} \label{app:ineq1}
  \left( \int_{- \Delta D}^{ 0} e^{- c(x + \Delta D)} z(x,t) dx \right)^2 \leq \left( \int_{\min\{0, \del D\}}^{\max\{0,\del D\}}  e^{-2 c(x + \Delta D)} dx \right) \left( \int_{\min\{0, \del D\}}^{\max\{0,\del D\}} z(x,t)^2 dx \right) = \bar M_1 \left( \int_{\min\{0, \del D\}}^{\max\{0,\del D\}} z(x,t)^2 dx \right), 
  \end{align} 
  where $ \bar M_1 = \frac{{\textrm{sign}} ( \del D)}{2c} \left( 1 - e^{-2c \del D} \right)$. Also, applying Young's, Cauchy-Schwarz inequality, 
  \ba 
  \left(  \int_{0}^{s(t)} \zeta(y)  w(y,t) dy \right)^2 
  & \leq  \left(  \int_{0}^{s(t)} \zeta(y) (\omega(x,t) - \left(x - s(t) \right) z(0,t) ) dy \right)^2 \leq   \left(  \int_{0}^{s(t)} \zeta(y) \omega(x,t) dx +  \frac{1}{\beta} \sqrt{\frac{\alpha}{c}} \left(1 - \cos \left( \sqrt{\frac{c}{\alpha} } s(t) \right) \right) z(0,t)  \right)^2, \notag\\
  \leq & 2 \left( \int_{0}^{s(t)} \zeta(y) \omega(x,t) dx \right)^2 + 2 \frac{\alpha}{\beta^2 c} z(0,t)^2 \leq \frac{ 2s_r}{\beta^2} ||\omega ||^2 + \frac{ 2\alpha}{\beta^2 c} z(0,t)^2. \label{app:ineq2} 
 \end{align} 
Applying \eqref{app:ineq1} and \eqref{app:ineq2} into \eqref{app:ftineq}, the following inequality is derived 
 \begin{align} 
  f(t)^2 \leq & 2 \bar M_1 \left( \int_{\min\{0, -\del D\}}^{\max\{0, -\del D\}} z(x,t)^2 dx \right) + \bar M_2 || \omega||^2 + \bar M_3 z(0,t)^2  + \bar M_4 X(t)^2 , 
  \end{align} 
where 
\begin{align} 
\bar M_2 = \frac{8 s_r}{\alpha^2}  ( 1 - e^{ - c \Delta D} )^2  , \quad \bar M_3 = \frac{ 8}{\alpha c}  ( 1 - e^{ - c \Delta D} )^2 , \quad \bar M_4 =  \frac{4}{\beta^2} ( 1 - e^{ - c \Delta D} )^2. 
\end{align} 

\subsection{Bound of $f'(t)^2$} \label{app:ftbound}

Note that 
 \ba 
f'(t) =  & - \int_{- \Delta D}^{ 0} z_{x}(x,t) dx   - c \int_{- \Delta D}^{ 0} e^{- c(x + \Delta D)} z(x,t) dx + ( 1 - e^{ - c \Delta D} ) \left( \frac{c\beta}{\alpha}  \int_{0}^{s(t)} \zeta(y)  w(y,t) dy - c X(t) \zeta(s(t))   \right) . 
\end{align} 
Thus, applying Young's inequality, 
\begin{align} 
(f'(t))^2 \leq &4 \left( \left( \int_{- \Delta D}^{ 0} z_{x}(x,t) dx \right)^2 + c^2 \left( \int_{- \Delta D}^{ 0} e^{- c(x + \Delta D)} z(x,t) dx \right)^2 + ( 1 - e^{ - c \Delta D} )^2 c^2 \left( \frac{\beta^2}{\alpha^2}  \left(\int_{0}^{s(t)} \zeta(y)  w(y,t) dy\right)^2 +  \zeta(s(t))^2 X(t)^2   \right)  \right) \notag\\
\leq & 4 | \del D| \left( \int_{\min\{0, -\del D\}}^{\max\{0, -\del D\}} z_{x}(x,t)^2 dx \right) + 2 c^2 \bar M_1 \left( \int_{\min\{0, -\del D\}}^{\max\{0, -\del D\}} z(x,t)^2 dx \right)  +  c^2 \left(  \bar M_2 || \omega||^2 + \bar M_3 z(0,t)^2  +  \bar M_4 X(t)^2\right) . 
\end{align}

\nocite{*}
\bibliography{wileyNJD-AMA}%

\clearpage

\section*{Author Biography}


\bibliographystyle{plain}

\end{document}